\documentclass[12pt]{amsart}
\usepackage{ifpdf}
\usepackage[colorlinks,linkcolor=blue,citecolor=blue,urlcolor=blue, pdfcenterwindow, pdfstartview={XYZ null null 1.2}, pdffitwindow, pdfdisplaydoctitle=true]{hyperref}
\usepackage[english]{babel}
\usepackage{array}
\usepackage{threeparttable}
\usepackage{rotating}
\usepackage[section]{placeins}
\usepackage{amssymb} 
\usepackage{amsmath}
\usepackage{mathrsfs}
\usepackage{breqn}
\IfFileExists{bbm.sty}{\usepackage{bbm}}{\newcommand{\mathbbm}[1]{\mathbf{##1}}}
\usepackage{enumerate}
\usepackage{enumitem}
\usepackage{color}
\usepackage{relsize}
\newtheorem{lemma}{Lemma}[section]

\newtheorem{prop}[lemma]{Proposition}
\newtheorem{coro}[lemma]{Corollary}
\newtheorem{theo}[lemma]{Theorem}

\newtheorem{rema}[lemma]{Remark}

\newtheorem{defi}[lemma]{Definition}

\newtheorem{exam}[lemma]{Example}

\begin{document}

\title{Asymptotic Schur orthogonality for lattices}

\author{A. Bendikov}
\address{Institute of Mathematics, Wroclaw University}
\email{bendikov@math.uni.wroc.pl}
\author{A. Boyer}
\address{Universit\'e Paris Cit\'e}
\email{adrien.boyer@imj-prg.fr }
\author{Ch. Pittet}
\address{I2M CNRS UMR7373 Aix-Marseille Univ.}
\email{pittet@math.cnrs.fr}
\thanks{This work was initiated years ago (January 2018) with the help of the program ``Research in Pairs'' of the Mathematisches Forschungsinstitut Oberwolfach. The authors thank the MFO and the CIRM for
RIP contracts providing excellent research environments}

\keywords{Equidistribution, quasi-regular representations, unitary representations, lattices, semisimple Lie groups, Furstenberg-Poisson boundary, Harish-Chandra's function}

\subjclass[2000]{Primary: 22D40; Secondary: 43, 47}

\date{September 28th, 2026}

%\tableofcontents
%
\begin{abstract} Let $\pi:G\to \mathcal{U}(L^{2}(G/P,\nu))$ be the boundary representation of a non-compact connected semisimple Lie group $G$ with finite center on its Furstenberg-Poisson boundary $(G/P,\nu)$. Let $\Gamma\subset G$ be a lattice in $G$ (uniform or not). We show that for any \emph{continuous} functions $\varphi,\psi,\varphi',\psi'$ on $G/P$,
$$
\lim_{n\to\infty}\frac{1}{|\Gamma_{n}|}\sum_{\gamma\in \Gamma_{n}}\frac{\langle\pi(\gamma)\varphi,\psi\rangle\overline{\langle\pi(\gamma)\varphi',\psi'\rangle}}{\Xi^{2}(\gamma)}=\langle \varphi,\varphi'\rangle\overline{\langle\psi,\psi'\rangle},
$$
where $\Gamma_{n}$ is a ball in $\Gamma$ with radius $n$ relative to a natural length function and with center  the identity element of $G$, and $\Xi$ is the restriction to $\Gamma$ of the Harish-Chandra function of $G$. As a corollary, we  deduce that when the real rank of $G$ is one, then the analogous convergence holds for any  $\varphi,\psi,\varphi',\psi'$ in $L^{2}(G/P,\nu)$ if and only if the lattice $\Gamma$ is uniform.
\end{abstract} 

\maketitle
\tableofcontents

\section*{Introduction}
Let $G$ be a finite group and let $\pi:G\to\mathcal U(H)$ be an irreducible unitary representation on a complex Hilbert space of dimension $d$. Schur's classical orthogonality relations imply that for all $v,w,v',w'\in H$, 
\[
\frac{d}{|G|}\sum_{g\in G}
\langle\pi(g)v,w\rangle
\overline{\langle\pi(g)v',w'\rangle}
=
\langle v,v'\rangle\overline{\langle w,w'\rangle}.
\]
For infinite groups and infinite-dimensional representations, one is led to replace the finite sum by suitably normalized averages over an exhausting family of subsets of the group and to ask whether the same inner-product identity emerges in the limit.

 Such asymptotic relations have been established for boundary representations of hyperbolic groups \cite{BG} and for other families of spherical boundary representations \cite{BP}. For semisimple groups over local fields, a general conjecture predicts asymptotic Schur orthogonality for every irreducible tempered unitary representation \cite{KYD}. The relations are known for $K$-finite vectors in both the non-Archimedean \cite{KYD} and Archimedean \cite{ALR} settings.

An operator-theoretic perspective on asymptotic Schur orthogonality relations is developed
in \cite{BBP} with the help of Hilbert tensor products. It allows to consider gradually stronger forms of asymptotic Schur orthogonality: convergence on a dense subspace, convergence on the whole Hilbert space, convergence in Hilbert tensor products, see  \cite[Proposition 1.1]{BBP}. In the case $G$ is semisimple over a local field, and  $\pi$ is the quasi-regular representation of $G$ on  its
Furstenberg--Poisson boundary $(B,\nu)$, it is proven that suitably normalized averages of operators of the type
$\pi(g)\otimes\pi(g^{-1})\in U(L^2(B,\nu)\otimes L^2(B,\nu))$, where $g\in G$, converge in the weak operator topology
to the flip operator $F$ characterized by the equalities $F(v\otimes w)=w\otimes v$, where $v\otimes w\in L^2(B,\nu)\otimes L^2(B,\nu)$. It implies asymptotic Schur orthogonality relations on the whole Hilbert space $L^2(B,\nu)$. It naturally raises the corresponding questions for lattices in $G$.

From now on, let $G$ be a noncompact connected semisimple Lie group
with finite center.
In the relations considered here, the coefficients are normalized locally with the help of  Harish–Chandra’s \(\Xi\)-function associated with \(G\), see Section \ref{section: behavior near infinity} below. In Theorem \ref{theo: main} below, we establish asymptotic Schur orthogonality relations on the space of continuous functions on the Furstenberg-Poisson boundary for every lattice in $G$. The proof relies on the combination of a double equidistribution result on the direct product of the Furstenberg Poisson boundary with itself (see Subsection  \ref{section: double} below) with a uniform approximation of normalized matrix coefficients, involving a Fatou type theorem (see Section  \ref{section:  behavior near infinity} below).
 A central feature of our results is that the space of vectors matters: in real rank one, the asymptotic Schur orthogonality relations extend to all square-integrable  functions on the boundary if and only if the lattice is cocompact.  Thus the domain of validity of asymptotic Schur orthogonality relations for a lattice corresponds to a geometric feature of the lattice. More precisely, suppose $G=KAN$ is an Iwasawa decomposition and $P=MAN$ is a minimal parabolic so that $G/P=K/M$. Let $\nu$ be  the Borel $K$-invariant probability measure on $G/P$ and denote $(G/P,\nu)$ the Furstenberg-Poisson boundary. Let $\Gamma<G$ be a lattice and let $\pi$ denote the quasi-regular representation on $L^2(G/P,\nu)$. Theorem~\ref{theo: main} below, as well as its more general  version expressed in Theorem~\ref{theo: main technical} below,  establish asymptotic Schur orthogonality relations  for all families of four vectors in the Banach space $C(G/P)$ of continuous functions on the Furstenberg-Poisson boundary, averaging over balls (or coronae) in $\Gamma$ with the normalization by $\Xi$ mentioned above. The results apply to uniform and nonuniform lattices alike. In rank one, Corollary~\ref{cor: equivalences in rank 1} identifies the validity of the  extension of the asymptotic Schur orthogonality relations  to the whole Hilbert space $L^2(G/P,\nu)$ of square-integrable functions on the Furstenberg-Poisson boundary, with a sharp quadratic estimate for sums of squared matrix coefficients over unit-width annuli. The same equivalence  probably  holds in any rank and  we hope to investigate this matter in a forthcoming paper.
The above mentioned quadratic estimate implies the property of rapid decay (property RD) with respect to the restriction to the lattice of the length function on $G$ induced by the distance on the symmetric space $G/K$. In rank one, property RD for this length function on the lattice  is known to hold precisely when the lattice is cocompact.

\section{Statement of the main results}
\subsection{Setting and notation}
We introduce the concepts and  notation needed to state the main result. Let $G$ be a connected semisimple Lie group with finite center and let $\frak g$ be its Lie algebra.
Let $K$ be a maximal compact subgroup of $G$ and let $\frak k$ be its Lie algebra. Let $\frak p$ be the orthogonal complement of $\frak k$ in $\frak g$
relative to the Killing form $B$. Among the  abelian sub-algebras of $\frak g$ contained in the subspace $\frak p$, let $\frak a$ be a maximal one. We assume $\dim \frak a>0$, i.e. the real rank of $G$ is strictly positive (or equivalently that $G$ is not compact). Let $\Sigma$ be the root system associated to $(\frak g,\frak a)$. Let
\[
	\frak g_{\alpha}=\{X\in\frak g: \mathrm{ad}(H)X=\alpha(H)X\ \ \forall H\in\frak a  \}
\]
be the root space of $\alpha\in\Sigma$.
Let 
\[
	\frak a^{sing}=\bigcup_{\alpha\in\Sigma}\mathrm{ker}(\alpha)
\]
be the union of the hyperplanes of $\frak a$ defined as the kernels of the roots of $\Sigma$. 
Let us choose a positive open  Weyl chamber $\frak a^+$ (i.e. a connected component of $\frak a\setminus \frak a^{sing}$).
Let $\Sigma^+\subset\Sigma$ be the set of positive roots ($\alpha\in\Sigma$ is positive if and only if $\alpha(H)>0$ for all $H\in \frak a^+$), and 
 $\frak n$ the nilpotent Lie algebra defined as the direct sum of root spaces of positive roots:
\[
	\frak n=\bigoplus_{\alpha\in\Sigma^+}\frak g_{\alpha}.
\]
Let $A=\exp(\frak a)$, $A^+=\exp(\frak a^+)$, and $N=\exp(\frak n)$.
Let $G=KAN$ be the Iwasawa decomposition defined by $K$ and $\frak a^+$. Let $Z(A)$ be the centralizer of $A$ in $G$ and  
 $M=Z(A)\cap K$. The  group $M$ normalizes $N$. Let $P=MAN$ be the minimal parabolic subgroup of $G$ associated to $K$ and $\frak a^+$.
Let $\nu$ be the unique Borel  $K$-invariant probability measure on the Furstenberg-Poisson boundary $G/P$. 
Let 
\[
\pi:G\to \mathcal{U}(L^2(G/P,\nu))	
\]
be the associated quasi-regular representation defined, on any $\varphi\in L^2(G/P,\nu)$,  by the formula
\[
	(\pi(g)\varphi)(b)=\varphi(g^{-1}b)\sqrt{\frac{{\rm d}g_*\nu}{{\rm d}\nu}(b)}.
\]
Let 
\[
	\Xi(g)=\langle\pi(g)\mathbbm 1_{G/P},\mathbbm 1_{G/P}\rangle
\]
be the Harish-Chandra function of $G$. 
Let $d_G$ be the left-invariant Riemannian metric on $G$ whose scalar product on the tangent space of $G$ at the identity $e$ is defined by the Killing form $B$ and the Cartan involution $\Theta$ associated to the decomposition
\[
	\frak g=\frak k\oplus\frak p,
\]
as
\[
	\langle Z,Y\rangle=-B(Z,\Theta(Y))\quad \forall Z,Y\in\frak g.
\]
It is convenient to consider also normalizations of the scalar product $\langle \cdot,\cdot \rangle$ and $d_G$. Once a normalization is chosen,
let $d_X$ be the unique Riemannian metric on the symmetric space $X=G/K$
such that the canonical projection $G\to G/K$ becomes a $G$-equivariant Riemannian submersion.
Let $x_0\in X$ be the image of the identity element of $G$ under the canonical projection.
Notice that if $H\in\frak a$, then
\[
	d_X(\exp(H)x_0,x_0)=\|H\|=\sqrt{\langle H,H\rangle}.
\]
We define the length function $L:G\to[0,\infty[$ as 
$$
L(g)=d_{X}(gx_{0},x_{0}).
$$
It is a proper function on $G$ which is symmetric and $K$-bi-invariant.
If $E$ is a subset of $G$ and $T>0$ we denote
$$E_{T}=\{g\in E : L(g)<T\}.$$
\subsection{Asymptotic Schur orthogonality relations for lattices}
\begin{theo}\label{theo: main} Let $G$ be a non-compact connected semi-simple Lie group with finite center. Let $\Gamma$ be a lattice in $G$. With the notation as above, for any continuous functions $\varphi,\psi,\varphi',\psi'$ on $G/P$,
$$
\lim_{T\to\infty}\frac{1}{|\Gamma_{T}|}\sum_{\gamma\in \Gamma_{T}}\frac{\langle\pi(\gamma)\varphi,\psi\rangle\overline{\langle\pi(\gamma)\varphi',\psi'\rangle}}{\Xi^{2}(\gamma)}=\langle \varphi,\varphi'\rangle\overline{\langle\psi,\psi'\rangle}.
$$
\end{theo}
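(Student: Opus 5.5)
The plan has three steps. First, approximate each normalized matrix coefficient uniformly by a product of boundary values. Second, reduce the $\Gamma$-average to a double equidistribution statement on $G/P\times G/P$. Third, control the error terms.

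\textbf{Step 1: uniform approximation of normalized coefficients.} Write $\gamma=k_\gamma a_\gamma l_\gamma$ in the Cartan decomposition $G=K\overline{A^+}K$. The boundary point $b_\gamma^+=k_\gamma P$ plays the role of the attracting point of $\gamma$, and $b_\gamma^-=l_\gamma^{-1}P$ that of its inverse. The claim to prove is the following Fatou-type estimate. For continuous $\varphi,\psi$ on $G/P$,
\[
\frac{\langle\pi(\gamma)\varphi,\psi\rangle}{\Xi(\gamma)}-\varphi(b_\gamma^-)\,\overline{\psi(b_\gamma^+)}\longrightarrow 0
\]
as $a_\gamma\to\infty$ while staying \emph{uniformly away from the walls} of $\frak a^+$. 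The estimate should be uniform in $k_\gamma,l_\gamma$.

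To prove it, write
\[
\langle\pi(\gamma)\varphi,\psi\rangle=\int \varphi(\gamma^{-1}b)\overline{\psi(b)}\,\sqrt{d\gamma_*\nu/d\nu}\,d\nu .
\]
Observe that $\sqrt{d\gamma_*\nu/d\nu}/\Xi(\gamma)$ is a probability-type density, since its integral against $\mathbbm 1$ equals $1$. Away from the walls, the Harish-Chandra estimates for $\Xi$ and for the Poisson kernel show that this density concentrates near $b_\gamma^+$, and that $\gamma^{-1}$ contracts a neighbourhood of $b_\gamma^+$ toward $b_\gamma^-$. Uniform continuity of $\varphi$ and $\psi$ then gives the limit. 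Near the walls the concentration fails, but the normalized coefficient is still bounded by $\|\varphi\|_\infty\|\psi\|_\infty$. This bound follows from positivity, since $|\langle\pi(\gamma)\varphi,\psi\rangle|\le \|\varphi\|_\infty\|\psi\|_\infty\Xi(\gamma)$.

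\textbf{Step 2: reduction to double equidistribution.} By Step 1, on the set of $\gamma\in\Gamma_T$ that are $\epsilon$-regular, the summand equals
\[
\varphi\overline{\varphi'}(b_\gamma^-)\;\overline{\psi\overline{\psi'}(b_\gamma^+)}+o(1).
\]
Here $F=\varphi\overline{\varphi'}$ and $H=\psi\overline{\psi'}$ are continuous. The target $\langle\varphi,\varphi'\rangle\overline{\langle\psi,\psi'\rangle}$ is exactly $\int F\,d\nu\cdot\overline{\int H\,d\nu}$. It therefore suffices to invoke the double equidistribution result of Subsection \ref{section: double}:
\[
\frac1{|\Gamma_T|}\sum_{\gamma\in\Gamma_T}\delta_{b_\gamma^-}\otimes\delta_{b_\gamma^+}\to\nu\otimes\nu
\]
weakly on $G/P\times G/P$. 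This is the lattice analogue, via Eskin–McMullen/Gorodnik–Oh type counting in $K\times K$-sectors, of the corresponding fact for Haar measure on $G_T$.

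\textbf{Step 3: error control, the main obstacle.} We must show that the proportion of $\gamma\in\Gamma_T$ lying within $\epsilon$ of the walls, in the Cartan projection, tends to a quantity $\delta(\epsilon)$ with $\delta(\epsilon)\to 0$. On these elements the summands are bounded by $\|\varphi\|_\infty\|\psi\|_\infty\|\varphi'\|_\infty\|\psi'\|_\infty$, so their total contribution is at most $C\delta(\epsilon)$. Letting $T\to\infty$ and then $\epsilon\to0$ concludes the proof.

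For Haar measure the required proportion bound holds because the density $\prod_\alpha\sinh(\alpha(H))^{m_\alpha}$ is dominated by the direction $\rho$, which is regular, so mass on $G_T$ concentrates in a thin cone around the $\rho$-direction intersected with the ball. The lattice version follows from the well-roundedness counting $|\Gamma\cap E_T|\sim \mathrm{vol}(E_T)$ for the sets $E_T=K\exp(\text{cone}\cap\text{ball}_T)K$, which are well-rounded in the Eskin–McMullen sense. I expect this counting for nonuniform lattices to be the delicate input. The uniformity in Step 1 near, but not on, the walls is the other technical point. I would handle it by quantifying the estimate in terms of $\min_\alpha\alpha(\log a_\gamma)$.
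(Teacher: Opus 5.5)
Your three-step architecture matches the paper's: a uniform Fatou-type approximation on a cone $KA^\theta K$ away from the walls, double equidistribution of boundary points, and disposal of the remaining lattice points by the bound $|\langle\pi(g)\varphi,\psi\rangle|\le\|\varphi\|_\infty\|\psi\|_\infty\Xi(g)$. However, Step 1 as you state it is false, because $b^-_\gamma=l_\gamma^{-1}P$ is not the attracting point of $\gamma^{-1}$.

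Write $a_\gamma=\exp H$. Then $\gamma^{-1}=l_\gamma^{-1}\exp(-H)k_\gamma^{-1}=(l_\gamma^{-1}m_0^{-1})\exp(\iota H)(m_0k_\gamma^{-1})$ with $\iota H\in\mathfrak a^+$. So the attracting point of $\gamma^{-1}$ is $\check\gamma=\mathbf b(\gamma^{-1})=l_\gamma^{-1}m_0^{-1}P$, which differs from yours by the longest Weyl element. Your point is $l_\gamma^{-1}P=\gamma^{-1}(b^+_\gamma)$. Your heuristic that $\gamma^{-1}$ contracts a neighbourhood of $b^+_\gamma$ is backwards: up to isometries of $K/M$, the differential of $\gamma^{-1}$ at $b^+_\gamma$ is $\mathrm{Ad}(a_\gamma^{-1})$ on $\mathfrak n^-=\bigoplus_{\alpha\in\Sigma^+}\mathfrak g_{-\alpha}$, which expands every direction.

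Here is a concrete counterexample in $SL_2(\mathbb R)$. Identify $G/P$ with $\mathbb{RP}^1$ via $gP\mapsto[ge_1]$. Take $\gamma=\mathrm{diag}(e^t,e^{-t})$ with $t\to\infty$, so $b^\pm_\gamma=[e_1]$ and $\check\gamma=[e_2]$. Take $\psi=\mathbbm 1_B$ and $\varphi\ge0$ continuous, supported near $[e_2]$, with $\varphi([e_2])=1$. Then $\langle\pi(\gamma)\varphi,\mathbbm 1_B\rangle/\Xi(\gamma)=\langle\varphi,\pi(\gamma^{-1})\mathbbm 1_B\rangle/\Xi(\gamma^{-1})\to\varphi([e_2])=1$, whereas your claim predicts $\varphi([e_1])=0$.

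The repair is the identity of Lemma \ref{lemma: behavior of normalized smooth coefficients}(1).
\begin{itemize}
\item Split $\psi=\psi(\hat\gamma)\mathbbm 1_B+(\psi-\psi(\hat\gamma)\mathbbm 1_B)$.
\item Bound the second piece by $\|\varphi\|_\infty\,\mathcal P_0|\psi-\psi(\hat\gamma)\mathbbm 1_B|(\gamma)$.
\item Treat the first piece by unitarity, $\langle\pi(\gamma)\varphi,\mathbbm 1_B\rangle=\langle\varphi,\pi(\gamma^{-1})\mathbbm 1_B\rangle$, using $\Xi(\gamma^{-1})=\Xi(\gamma)$ and the fact that $KA^\theta K$ is stable under inversion.
\end{itemize}
The only analytic input is then the concentration of $\pi(g)\mathbbm 1_B/\Xi(g)$ at $\mathbf b(g)$ for $g\in KA^\theta K$ \cite[Lemma 3.2]{BLP}, applied to $g=\gamma$ and to $g=\gamma^{-1}$. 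No boundary dynamics of $\gamma^{-1}$ is needed.

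With $\check\gamma$ in place of $l_\gamma^{-1}P$, your Step 2 is Proposition \ref{prop: equidistribution}, up to the negligible set $\Gamma_T\setminus\Gamma_T^+$. The final value happens to be insensitive to your mislabelling, because $kM\mapsto km_0M$ is a $\nu$-preserving homeomorphism of $K/M$ sending $\check\gamma$ to $l_\gamma^{-1}P$. But the intermediate lemma you rely on is wrong, so the argument as written breaks at Step 1.

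Finally, Step 3 does not need a limit $\epsilon\to0$. For every fixed $\theta>0$ one has $|\Gamma^\theta_T|\sim|\Gamma_T|$, so the near-wall proportion itself tends to zero.
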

The above result does not extend to all vectors in $L^{2}(G/P,\nu)$ in general, as the following corollary shows.

\begin{coro}\label{cor: equivalences in rank 1} Let $G$ be a connected semi-simple Lie group with finite center.  Assume  the real rank of $G$ equals $1$. Let $\Gamma$ be a lattice in $G$. The following three conditions are equivalent
\begin{enumerate}
\item For any  functions $\varphi,\psi,\varphi',\psi'$ in $L^{2}(G/P,\nu)$,
$$
\lim_{T\to\infty}\frac{1}{|\Gamma_{T}|}\sum_{\gamma\in \Gamma_{T}}\frac{\langle\pi(\gamma)\varphi,\psi\rangle\overline{\langle\pi(\gamma)\varphi',\psi'\rangle}}{\Xi^{2}(\gamma)}=\langle \varphi,\varphi'\rangle\overline{\langle\psi,\psi'\rangle}.
$$
\item The restricted representation $\pi|_{\Gamma}$ satisfies property RD
with respect to $L|_{\Gamma}$. More precisely, there exists a constant
$C>0$ such that, for all unit vectors
$\varphi,\psi\in L^{2}(G/P,\nu)$ and all $T\geq1$,
\[
\sum_{\gamma\in\Gamma_{T+1}\setminus\Gamma_T}
|\langle\pi(\gamma)\varphi,\psi\rangle|^{2}
\leq CT^{2}.
\]
The exponent $2$ is optimal.

\item The lattice $\Gamma$ is uniform.
\end{enumerate}
\end{coro}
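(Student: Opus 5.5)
We prove the cycle (3)$\Rightarrow$(2)$\Rightarrow$(1)$\Rightarrow$(3), using Theorem~\ref{theo: main} for continuous vectors together with standard rank one asymptotics. In real rank one, $\Xi(a_t)\asymp (1+t)e^{-\rho t}$ where $\rho$ is the half sum of positive roots (with multiplicities), and $|\Gamma_T|\asymp e^{2\rho T}$ for every lattice. The cusps of a nonuniform lattice contribute only to lower order, so the same count holds for annuli: $|\Gamma_{T+1}\setminus\Gamma_T|\asymp e^{2\rho T}$. Consequently
\[
\sum_{\gamma\in\Gamma_{T+1}\setminus\Gamma_T}\Xi^2(\gamma)\asymp T^2 .
\]
Taking $\varphi=\psi=\mathbbm 1_{G/P}$ shows that the exponent $2$ in (2) cannot be lowered.

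\emph{(3)$\Rightarrow$(2).} Let $\Gamma$ be uniform and choose a relatively compact fundamental domain $D\ni e$ for $\Gamma$ in $G$. For unit vectors $\varphi,\psi$, compare the sum over the annulus with an integral over $G$. Since $D$ is compact, $\pi(\gamma)$ and $\pi(\gamma d)$ differ by $\pi(d)$, which lies in a compact set of operators. The plan is first to reduce to positive functions: replace $\varphi,\psi$ by $|\varphi|,|\psi|$, which only increases $|\langle\pi(\gamma)\varphi,\psi\rangle|$ since $\pi$ is positivity preserving. Then smooth by an approximate identity supported near $e$ to obtain $\langle\pi(\gamma)\varphi,\psi\rangle\le C\langle\pi(\gamma d)\tilde\varphi,\tilde\psi\rangle$ uniformly in $d\in D$. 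This gives
\[
\sum_{\gamma\in\Gamma_{T+1}\setminus\Gamma_T}|\langle\pi(\gamma)\varphi,\psi\rangle|^2\le C\int_{G_{T+2}\setminus G_{T-1}}|\langle\pi(g)\tilde\varphi,\tilde\psi\rangle|^2\,dg .
\]
Now apply the Cartan decomposition $dg\asymp \sinh^{m}(t)\cosh^{m'}(t)\,dk\,dt\,dk'$, where $m,m'$ are the root multiplicities (so the density is $\asymp e^{2\rho t}$ for $t\ge 1$), and average over $K\times K$. Haagerup's inequality for positive vectors, $\int_K\int_K|\langle \pi(k a_t k')\varphi,\psi\rangle|^2\,dk\,dk'\lesssim \Xi^2(a_t)\|\varphi\|^2\|\psi\|^2$, then yields the bound $CT^2$. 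This averaging step, which is the Harish-Chandra–Herz majorization principle, is routine.

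\emph{(2)$\Rightarrow$(1).} Fix continuous $\varphi,\psi$. By (2), the bilinear form
\[
\Phi_T(\varphi,\psi;\varphi',\psi')=\frac{1}{|\Gamma_T|}\sum_{\gamma\in\Gamma_T}\frac{\langle\pi(\gamma)\varphi,\psi\rangle\overline{\langle\pi(\gamma)\varphi',\psi'\rangle}}{\Xi^2(\gamma)}
\]
is uniformly bounded in $T$ by $C\|\varphi\|\|\psi\|\|\varphi'\|\|\psi'\|$. To see this, split $\Gamma_T$ into annuli $n\le L<n+1$ and apply Cauchy–Schwarz on each annulus:
\[
\sum_{n\le T}\frac{e^{2\rho n}}{n^2}\cdot\frac{Cn^2}{e^{2\rho T}}\Big/ \,1\le C' .
\]
Uniform boundedness of these multilinear forms together with convergence on the dense subspace $C(G/P)$, given by Theorem~\ref{theo: main}, yields convergence on all of $L^2$ by a standard $\varepsilon/3$ argument applied in each variable.

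\emph{(1)$\Rightarrow$(3).} This is the main obstacle. Assume $\Gamma$ is nonuniform and fix a cusp with parabolic stabilizer $\Gamma_\xi\subset\Gamma\cap N_\xi$, where $\xi\in G/P$ is the cusp point. If (1) holds, then taking $\varphi'=\varphi$ and $\psi'=\psi$ shows that the quadratic forms $\Phi_T(\varphi,\psi;\varphi,\psi)$ converge for every $\varphi,\psi$. By the uniform boundedness principle, applied twice (once in each variable), the norms $\sup_T\|\Phi_T\|$ are finite. It therefore suffices to produce unit vectors $\varphi_j,\psi_j$ for which $\Phi_T(\varphi_j,\psi_j;\varphi_j,\psi_j)$ is unbounded. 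I would take $\varphi_j=\psi_j$ to be normalized indicators of small balls $B_j$ shrinking to the cusp point $\xi$. The elements of $\Gamma_\xi$ of length about $t$ map $B_j$ largely onto itself. This forces $\langle\pi(\gamma)\varphi_j,\varphi_j\rangle$ to be comparable to $1$, or at least much larger than $\Xi(\gamma)\approx te^{-\rho t}$, for a set of roughly $e^{\rho t}$ such $\gamma$, since parabolic subgroups grow like $e^{\rho t}$ in $L$. Dividing by $\Xi^2$ gives a contribution of order $e^{\rho t}e^{2\rho t}/t^2$ on that annulus. Summing over annuli up to $T$ and dividing by $|\Gamma_T|\asymp e^{2\rho T}$ gives
\[
\Phi_T\gtrsim \frac{e^{\rho T}}{T^2}\longrightarrow\infty,
\]
a contradiction. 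The delicate point is the estimate on the matrix coefficients: one must choose $j$ depending on $T$ so that the ball still contains the horospherical orbit scale, and control the Radon–Nikodym factor $\sqrt{d\gamma_*\nu/d\nu}$ on $B_j$ for parabolic $\gamma$. Alternatively, the same computation shows directly that the inequality in (2) fails, since it gives $\sum_{\text{annulus}}|\langle\pi(\gamma)\varphi_j,\varphi_j\rangle|^2\gtrsim e^{\rho T}\gg T^2$. We can then conclude via the boundedness equivalence: (1) implies uniform boundedness of the forms $\Phi_T$, which in turn implies the annulus bound (2) up to constants.
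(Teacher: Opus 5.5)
The gap is in (3)$\Rightarrow$(2), which is the only implication of your cycle that starts from (3). The $K\times K$-averaged bound you quote is fine: it is the Cowling--Haagerup--Howe inequality, valid for all vectors of a tempered representation. What fails is the transfer from $\Gamma$ to $G$, namely the domination $\langle\pi(\gamma)\varphi,\psi\rangle\le C\langle\pi(\gamma d)\tilde\varphi,\tilde\psi\rangle$ for all $d\in D$, with $\|\tilde\varphi\|_2,\|\tilde\psi\|_2$ bounded independently of $\varphi,\psi$.

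Since $\langle\pi(\gamma d)\tilde\varphi,\tilde\psi\rangle=\langle\pi(\gamma)\pi(d)\tilde\varphi,\tilde\psi\rangle$, right multiplication by $d$ moves the boundary point $\check\gamma$ seen by the coefficient to roughly $d^{-1}\check\gamma$. This displacement has fixed size on $G/P$ and does not shrink as $L(\gamma)\to\infty$. Suppose you keep $\tilde\psi=\psi$ and want the bound for every positive $\psi$. Then you need $\varphi\le C\pi(d)\tilde\varphi$ pointwise for all $d\in D$, i.e. $\tilde\varphi\gtrsim\sup_{d\in D}\pi(d^{-1})\varphi$. That is essentially the local maximal function $b\mapsto\sup_{B(b,r)}\varphi$, which can be identically $+\infty$ for $\varphi\in L^2$.

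Modifying $\psi$ as well cannot rescue the step, because of an internal contradiction. Your justification of the domination uses only that $D$ is compact. The passage from the lattice sum to the integral uses only that the translates $\gamma D$ are disjoint and lie in a slightly thickened shell. Both properties hold for a small neighbourhood $U$ of $e$ in any discrete subgroup. So a valid version of this step would prove (2) for nonuniform lattices as well, which your own cusp computation refutes. (Run in higher rank, where the same averaged bound holds and $\int\Xi^2$ over shells is still polynomial in $T$, your sketch would also give rapid decay for all cocompact lattices via the positive-vector comparison quoted in the paper. That is Valette's conjecture, open in general.)

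What is missing is a genuinely geometric input about $\Gamma$. The paper uses that a uniform lattice in rank one is Gromov hyperbolic with Gromov boundary $G/P$, and that its boundary action is amenable, so that $\pi|_\Gamma$ is weakly contained in the regular representation. It then applies the annular estimate of \cite[Proposition 2.4]{BG} to $d_G$-annuli. The inequalities $d_G(e,g)-\mathrm{diam}(K)\le L(g)\le d_G(e,g)$ place each $\Gamma_{T+1}\setminus\Gamma_T$ inside such an annulus.

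The rest of your proposal is sound. The implication (2)$\Rightarrow$(1) and the optimality of the exponent are essentially as in the paper. Your cusp argument is a valid, more elementary substitute for the paper's route to (3), which goes through rapid decay of the regular representation, passage to an amenable subgroup and polynomial growth. One correction there: what you need is that $\Gamma\cap N_\xi$ is a lattice in $N_\xi$, and the inclusion you wrote is reversed. The cusp argument even shows directly that (2) fails for nonuniform lattices. But without a correct proof of (3)$\Rightarrow$(2) the cycle does not close.
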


\noindent
The estimate in (2) is a precise annular form of rapid decay. Summing it over unit-width annuli gives a bound of order $(1+T)^3$ for the sum of squared matrix coefficients on $\Gamma_T$, uniformly over unit vectors. The annular exponent $2$ is optimal: for $\varphi=\psi=\mathbbm 1_{G/P}$, Lemma~\ref{lemma: HC} and lattice-point counting give
\[
\sum_{\gamma\in\Gamma_{T+1}\setminus\Gamma_T}\Xi(\gamma)^2
\asymp T^2.
\]
Here $\asymp$ means that the ratio is bounded above and below by positive constants for all sufficiently large $T$. Throughout, rapid decay refers to the geometric length $L|_\Gamma$; for nonuniform lattices in rank one this must not be confused with rapid decay for a word length.

\subsection{Double equidistribution on the boundary}\label{section: double}
The proof uses an equidistribution result on $G/P\times G/P$. Related results for hyperbolic groups appear in \cite[Theorem 3.2]{BG} inspired by the fundamental work of Roblin, see \cite[Chapitre 4]{Rob}. In the semisimple setting, we deduce the version needed here from the bisector-counting theorem of Gorodnik and Oh \cite[Theorem 1.6]{GorOh}, together with the concentration estimates in \cite[Proposition 3.3]{BLP}. A version with a convergence rate is stated in  \cite[Theorem 7.1 and Theorem 7.2]{GorNev}. Its formulation uses the polar decomposition $KA^+K\subset G$ and the map 
\[
	\bold{b}:KA^+K\to G/P
\]
\[
	\bold{b}(k_1ak_2)= k_1P.
\]
Notice that $KA^+K$ is open and dense in $G$. The map $\bold{b}$ is well defined (see \cite[Theorem 5.20 and its proof]{Kna} or \cite[Theorem 1.1 and Corollary 1.2 of Ch. IX and their proofs]{HelDifGeo}).
For $g\in KA^{+}K$ it will be convenient to write 
$$\hat{g}=\bold{b}(g).$$
Notice that if $g\in KA^{+}K$ then $g^{-1}\in KA^{+}K$ (see the proof of Lemma \ref{lemma: half Roblin} below). We write 
$$\check{g}=\bold{b}(g^{-1}).$$
If $E$ is a subset of $G$, we define $E^{+}=E\cap KA^+K$. 

\begin{prop}\label{prop: equidistribution}  Let $G$ be a non-compact connected semisimple Lie group with finite center. Let $\Gamma$ be a lattice in $G$. With the notation as above, for any continuous function $F$ on $G/P\times G/P$,
$$
\lim_{T\to\infty}\frac{1}{|\Gamma^{+}_{T}|}\sum_{\gamma\in \Gamma^{+}_{T}}F\left({\hat \gamma},{\check{\gamma}}\right)=\int_{G/P}\int_{G/P}F(a,b)d\nu(a)d\nu(b).
$$
\end{prop}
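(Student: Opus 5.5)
By Stone--Weierstrass and density, it suffices to prove the convergence for product test functions $F(a,b)=f_1(a)f_2(b)$. Indeed, finite linear combinations of such products are dense in $C(G/P\times G/P)$ for the sup norm, and both sides are averages against probability measures. Since $G/P=K/M$, it is further enough to take $f_1,f_2$ lifted from continuous functions on $K$ that are right $M$-invariant. The plan is to recast the sum as a bisector count and invoke the theorem of Gorodnik and Oh \cite[Theorem 1.6]{GorOh}.

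\textbf{Reduction to counting in bisectors.} For Borel sets $\Omega_1,\Omega_2\subset K$, right (resp.\ left) $M$-invariant with null boundary, consider the bisector
\[
S_T(\Omega_1,\Omega_2)=\{k_1ak_2: k_1\in\Omega_1,\ a\in A^+,\ k_2\in\Omega_2,\ L(a)<T\}.
\]
For $g=k_1ak_2\in KA^+K$ we have $\hat g=k_1P$. Since $g^{-1}=k_2^{-1}a^{-1}k_1^{-1}$ and $a^{-1}=w a' w^{-1}$ with $a'\in A^+$ ($w$ representing the long Weyl element), we get $\check g=k_2^{-1}wP$. This is where the well-definedness of $\bold b$ on inverses is used. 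Thus the condition $(\hat\gamma,\check\gamma)\in B_1\times B_2$ amounts to $\gamma\in S_T(\Omega_1,\Omega_2)$ for suitable $\Omega_i$ determined by $B_i$.

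Gorodnik--Oh gives
\[
|\Gamma\cap S_T(\Omega_1,\Omega_2)|\sim \mathrm{vol}(S_T(\Omega_1,\Omega_2))/\mathrm{vol}(G/\Gamma),
\]
and the Haar measure in polar coordinates factorizes as $dk_1\,\delta(a)\,da\,dk_2$. Hence the count is asymptotic to $\nu(B_1)\nu(B_2)\,|\Gamma_T|$. Here we use that the singular part $\Gamma\setminus KA^+K$ is negligible, and that $|\Gamma_T|\sim|\Gamma_T^+|\sim \mathrm{vol}(G_T)/\mathrm{vol}(G/\Gamma)$. Also $w$ maps $\nu$ to itself by $K$-invariance.

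\textbf{Passage from sets to continuous functions.} This gives the statement for indicator functions of products of $\nu$-continuity sets. Continuous $f_i$ are then handled by sandwiching between step functions built from such sets, using a standard Portmanteau argument. This shows that the empirical measures $\frac1{|\Gamma_T^+|}\sum\delta_{(\hat\gamma,\check\gamma)}$ converge weakly to $\nu\otimes\nu$.

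\textbf{Main obstacle.} The hardest step is checking that the bisectors satisfy the well-roundedness hypotheses of \cite{GorOh} uniformly, and that contributions near the walls of $A^+$ (where $\bold b$ degenerates) are negligible. I would handle this with the concentration estimate \cite[Proposition 3.3]{BLP}. It shows that the Haar volume of $\{k_1ak_2\in G_T:\ a \text{ within distance } R \text{ of a wall}\}$ is $o(\mathrm{vol}(G_T))$, because the density $\delta(a)$ concentrates along the direction $2\rho$ in the interior of $\frak a^+$. On the complement, small perturbations $g\mapsto g_\epsilon g g'_\epsilon$ move $\hat g,\check g$ only slightly, which yields the well-roundedness required by Gorodnik--Oh.
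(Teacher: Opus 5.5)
Your proposal is correct and is essentially the paper's argument. You reduce weak convergence to products $U\times V$ of $\nu$-continuity sets; the paper invokes \cite[Theorem 2.8]{Bil} for this directly, so your Stone--Weierstrass detour is unnecessary. You then identify the counting set with the bisector $\tilde U\overline{A^+}Mm_0^{-1}(\tilde V)^{-1}$ via the opposition involution, apply \cite[Theorem 1.6]{GorOh}, and discard the lattice points outside the regular cone using \cite[Proposition 3.3]{BLP} together with Eskin--McMullen, exactly as the paper does. The well-roundedness you single out as the main obstacle is already built into \cite[Theorem 1.6]{GorOh}, which both you and the paper use as a black box for arbitrary $\Omega_1,\Omega_2$ with null boundary, so you need not verify it yourself.
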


For technical reasons (namely to avoid dealing with tangent convergence near the boundary) we will prove that the equidistribution also holds if we retain only lattice points whose $\frak a^{+}$ component is bounded away from the walls enclosing $\frak a^{+}$. In order to make this precise, recall that
the linear form defined as half the sum of the positive roots counted with multiplicity
\[
	\rho=\frac{1}{2}\sum_{\alpha\in\Sigma^+}\dim\frak g_{\alpha}\cdot\alpha
\]
reaches its maximum  on 
$$\{H\in\frak a^+ : \|H\|\leq 1\}$$ 
at a unique vector  $H_{max}$ of norm $1$ called the barycenter of $\frak a^+$.
Let $\theta>0$. We define the open cone in $\frak a^+$ of angle $2\theta$ around $H_{max}$ as
\[
	\frak a^{\theta}=\{H\in\frak a^+: \measuredangle (H, H_{max})<\theta\},
\]
Let $A^{\theta}=\exp(\frak a^{\theta})$.
Notice that if $g\in KA^{\theta}K$ then  $g^{-1}\in KA^{\theta}K$ (see the proof of Lemma \ref{lemma: half Roblin} below and notice that the opposite involution is the restriction of an Euclidean isometry which preserves $\frak{a^{+}}$ and fixes $H_{max}$).
If $E$ is a subset of $G$, we define $E^{\theta}=E\cap KA^{\theta}K$. 

\begin{prop}\label{prop: equidistribution away from the walls}  Let $G$ be a non-compact connected semisimple Lie group with finite center. Let $\Gamma$ be a lattice in $G$. Let $\theta>0$. With the notation as above, for any continuous function $F$ on $G/P\times G/P$,
$$
\lim_{T\to\infty}\frac{1}{|\Gamma^{\theta}_{T}|}\sum_{\gamma\in \Gamma^{\theta}_{T}}F\left({\hat \gamma},{\check{\gamma}}\right)=\int_{G/P}\int_{G/P}F(a,b)d\nu(a)d\nu(b).
$$
\end{prop}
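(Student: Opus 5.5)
We will deduce the statement from the bisector-counting theorem of Gorodnik and Oh \cite[Theorem 1.6]{GorOh}, in the same way as Proposition \ref{prop: equidistribution}, keeping track of the extra restriction to the cone $\frak a^{\theta}$. By Stone--Weierstrass, and because the limiting measure $\nu\otimes\nu$ is a probability measure, it suffices to treat $F=f_1\otimes f_2$ with $f_1,f_2\in C(G/P)$. Writing $G/P=K/M$, we may lift $f_1,f_2$ to right $M$-invariant continuous functions on $K$. The problem then becomes counting lattice points
$$\gamma=k_1ak_2\in \Gamma\cap K A^{\theta}_T K$$
weighted by $f_1(k_1)f_2(k_2^{-1})$. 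Here we use that $\check\gamma=\mathbf b(k_2^{-1}a^{-1}k_1^{-1})$ equals $k_2^{-1}w_0P$ up to the opposition involution; since $w_0$ normalizes $M$ and preserves $\nu$, this is a harmless measure-preserving change of variables on the second factor. By approximation from above and below by indicator functions of small sets with $\nu$-null boundary, it is enough to count $\Gamma\cap \Omega_1 A^{\theta}_T\Omega_2^{-1}$ for Borel sets $\Omega_1,\Omega_2\subset K$ with null boundary.

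The key input is Gorodnik--Oh's well-roundedness theorem. For a well-rounded family $S_T\subset G$ one has $|\Gamma\cap S_T|\sim \mathrm{vol}(S_T)/\mathrm{vol}(G/\Gamma)$. The family $S_T=\Omega_1A^{\theta}_T\Omega_2^{-1}$ is well-rounded whenever $\Omega_1,\Omega_2$ have null boundary and the $A$-part is a region $\exp(\frak a^{\theta}\cap\{\|H\|<T\})$ in the chamber. We must check that the cone truncation preserves well-roundedness. Small perturbations $g_\epsilon S_T g'_\epsilon$ move the Cartan projection by $O(\epsilon)$ and the $K$-components by $O(\epsilon)$ outside a neighborhood of the walls; this is the Lipschitz-type property of the $KA^+K$ decomposition used in \cite{GorOh}. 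The boundary of the cone in $\frak a$ is a Lipschitz hypersurface, so the volume of an $O(1)$-neighbourhood of $\partial \frak a^{\theta}_T$ with respect to the density $\prod_\alpha\sinh(\alpha(H))^{\dim\frak g_\alpha}\,dH$ is $o$ of the total. This holds because the mass of $\frak a^\theta_T$ concentrates near the ray $\mathbb R_+H_{max}$, as in the concentration estimates of \cite[Proposition 3.3]{BLP}.

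Next we compute the volume in polar coordinates: $dg=\delta(H)\,dk_1\,dH\,dk_2$, with $\delta(H)=\prod_\alpha\sinh(\alpha(H))^{m_\alpha}$. This gives
$$\mathrm{vol}(S_T)=\mu_K(\Omega_1)\mu_K(\Omega_2)\int_{\frak a^\theta_T}\delta(H)\,dH.$$
Dividing by the same expression with $\Omega_i=K$, the $A$-integral cancels. We obtain $|\Gamma\cap S_T|/|\Gamma^\theta_T|\to\nu(\Omega_1M)\nu(\Omega_2 M)$, which yields the claim after the approximation argument. In fact this ratio argument makes the concentration near $H_{max}$ unnecessary for the main term. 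It is needed only to ensure that $|\Gamma_T^\theta|\to\infty$ with the correct asymptotics, and this follows from $\int_{\frak a^\theta_T}\delta\asymp\int_{\frak a^+_T}\delta$ for every $\theta>0$.

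The main obstacle is the well-roundedness of $\Omega_1A^\theta_T\Omega_2^{-1}$ uniformly in $T$. The difficulty comes from the non-smoothness of the $KA^+K$ decomposition near the walls, where the $K$-components are not continuous. Restricting to the cone $\frak a^\theta$ is exactly what helps here: on $KA^\theta K$, away from a compact set, the map $g\mapsto(k_1M,H,k_2M)$ is uniformly Lipschitz. This is the reason for avoiding tangent convergence. I would first try to import the relevant estimate directly from \cite[Section 7]{GorOh} or \cite[Theorem 7.1]{GorNev}. Failing that, I would verify it by hand using the fact that $\alpha(H)\geq c\|H\|$ on $\frak a^\theta$ for all positive roots $\alpha$.
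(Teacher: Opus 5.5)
\paragraph{Gap: angles for which the cone reaches the walls.} Your argument covers only angles with $\overline{\frak a^{\theta}}\subset\frak a^+\cup\{0\}$. The proposition is stated for every $\theta>0$, including angles for which $\frak a^\theta$ reaches the walls. For large $\theta$ one has $\frak a^\theta=\frak a^+$, and the statement is Proposition~\ref{prop: equidistribution}. For such $\theta$ the inequality $\alpha(H)\geq c\|H\|$ on $\frak a^\theta$, which you plan to use, is false. Moreover, the $K$-components of the Cartan decomposition are not uniformly Lipschitz near a wall: their modulus degenerates as $\alpha(H)\to0$, and on the wall the centralizer of $\exp H$ in $K$ is strictly larger than $M$. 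So your wavefront control breaks down on the near-wall part of $KA^\theta_TK$, which is a region of unbounded volume, and nothing in your sketch treats it.

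What is needed there is exactly the ingredient you declare unnecessary for the main term. That part carries a vanishing proportion of the mass because the mass concentrates near $\mathbb R_+H_{max}$. For small $\theta$ your outline is sound. Outside a compact set the strong wavefront lemma applies on $KA^\theta K$, and the lateral boundary $\{\measuredangle(H,H_{max})=\theta\}$ is negligible even at scale $O(1)$. The ratio of polar volumes then gives $\nu(U)\nu(V)$ without knowing $|\Gamma^\theta_T|\sim|\Gamma_T|$.

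\paragraph{Repair.} Run your argument for one small $\theta_0$. For $\theta>\theta_0$, use $\Gamma^{\theta_0}_T\subset\Gamma^\theta_T\subset\Gamma_T$ together with $|\Gamma_T\setminus\Gamma^{\theta_0}_T|=o(|\Gamma_T|)$. The latter follows from three facts:
\begin{enumerate}
\item counting in balls \cite{EskMcM};
\item your own count $|\Gamma^{\theta_0}_T|\sim\mathrm{vol}(KA^{\theta_0}_TK)/\mathrm{vol}(G/\Gamma)$;
\item $\mathrm{vol}(KA^{\theta_0}_TK)\sim\mathrm{vol}(G_T)$, i.e.\ the asymptotic \emph{equivalence} of the integrals of $\prod_{\alpha\in\Sigma^+}\sinh(\alpha(H))^{\dim\frak g_{\alpha}}$ over the $\theta_0$-cone and over the whole chamber, truncated at radius $T$.
\end{enumerate}
The comparability $\asymp$ you quote is not enough for the third point.

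\paragraph{Comparison with the paper.} This concentration step is the paper's key input, and once you have it the paper's route is much shorter. The paper reduces to rectangles $U\times V$ via \cite[Theorem 2.8]{Bil}. It then counts the \emph{full} bisector $\Gamma\cap G_T\cap\tilde U\overline{A^+}Mm_0^{-1}(\tilde V)^{-1}$ with \cite[Theorem 1.6]{GorOh} used as a black box. Finally it discards $\Gamma_T\setminus\Gamma^\theta_T$ by \cite[Proposition 3.3]{BLP} and normalizes with \cite{EskMcM}. No new well-roundedness statement is ever needed, and all $\theta$ are handled at once. Your approach buys a direct count in small cones without using $|\Gamma^\theta_T|\sim|\Gamma_T|$, at the price of re-proving a cone version of Gorodnik--Oh.

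\paragraph{Smaller inaccuracy.} An $O(1)$-neighbourhood of the boundary of the truncated cone is \emph{not} negligible. Its spherical part $\{\|H\|=T\}$ carries a fixed proportion, about $1-e^{-\delta}$ with $\delta$ the volume entropy, of the mass. Only the lateral part is negligible at scale $O(1)$. The spherical part must be treated at scale $\epsilon$, with relative mass $O(\epsilon)$ uniformly in $T$, as for balls.
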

Notice that when $\theta$ is large enough, the condition $\measuredangle (H, H_{max})<\theta$ is void, hence for those $\theta$ we have $\frak a^{\theta}=\frak a^{+}$, hence $A^{\theta}=A^{+}$ and  $\Gamma^{\theta}=\Gamma^{+}$. This shows that Proposition $\ref{prop: equidistribution away from the walls}$ contains  Proposition $\ref{prop: equidistribution}$.  

\section{Proof of the double equidistribution}
In order to prove Proposition \ref{prop: equidistribution away from the walls}, notice that $G/P$ is a differentiable manifold and the inclusion $K\subset G$ induces a diffeomorphism $K/M\to G/P$. 
If $p:K\to K/M$ denotes the canonical projection, the relevant measure  viewed on $K/M$ is 
$\nu=p_{*}\mu$ where $\mu$ is the probability Haar measure on $K$. The Borel probability space $(K/M,\nu)$ is separable and its topology can be defined by a metric. The same is true for  the Borel probability space $(K/M\times K/M,\nu\otimes\nu)$. If $(a,b)\in K/M\times K/M$ and if $D_{(a,b)}$ denotes the corresponding Dirac measure, if we use the notation $\eta=\nu\otimes\nu$ and
$$
\eta_{T}=\frac{1}{|\Gamma^{\theta}_{T}|}\sum_{\gamma\in \Gamma^{\theta}_{T}}D_{\left({\hat \gamma},{\check{\gamma}}\right)},
$$
we may reformulate  Proposition  \ref{prop: equidistribution away from the walls} as the weak convergence of the probability measures $\eta_{T}$ to the measure $\eta$:
$$
\lim_{T\to\infty}\eta_{T}=\eta.
$$
According to \cite[Theorem 2.8, (i)]{Bil}, the above convergence is equivalent to convergence  on sets of the type $U\times V$ where $U,V$ are  sets of continuity for $\nu$. This is the content of the next lemma (which improves \cite[Lemma 3.4]{BLP}):
\begin{lemma}\label{lemma: half Roblin} Let $U,V$ be Borel subsets of $K/M$ with $\nu(\partial U)=\nu(\partial V)=0$.
Let $\theta>0$. Then
\[
	\lim_{T\to\infty}\frac{|\{\gamma\in\Gamma^{\theta}_T: {\hat \gamma}\in U\,\, \mbox{and}\,\, {\check{\gamma}}\in V \}|}{|\Gamma^{\theta}_T|}=\nu(U)\nu(V).
\]	
\end{lemma}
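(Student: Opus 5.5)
The plan is to reduce the statement to the bisector counting theorem of Gorodnik and Oh \cite[Theorem 1.6]{GorOh}. That theorem describes the asymptotic count of lattice points in sets of the form $\Omega_1 A_T \Omega_2^{-1}$ with $\Omega_1,\Omega_2\subset K$. In our setting we take
\[
S_T(U,V,\theta)=\{k_1 a k_2 : k_1\in p^{-1}(U),\ k_2^{-1}\in p^{-1}(V),\ a\in A^{\theta},\ \|\log a\|<T\}.
\]
Three facts are needed first.

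\textbf{Step 1 (preliminaries).} We show that $g=k_1ak_2\mapsto(\hat g,\check g)=(k_1P,k_2^{-1}P)$ is well defined. For the inverse, write $g^{-1}=k_2^{-1}a^{-1}k_1^{-1}=(k_2^{-1}w_0)(w_0^{-1}a^{-1}w_0)(w_0^{-1}k_1^{-1})$, where $w_0\in N_K(\frak a)$ represents the longest Weyl element. Since $-\mathrm{Ad}(w_0)$ preserves $\frak a^+$ and fixes $H_{max}$, this gives $g^{-1}\in KA^\theta K$ and $\check g=k_2^{-1}w_0P$. Up to the fixed $K$-translation by $w_0$, which preserves $\nu$ and continuity sets, the condition $\check\gamma\in V$ becomes a condition $k_2^{-1}\in p^{-1}(V')$. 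Hence
\[
\{\gamma\in\Gamma^\theta_T:\hat\gamma\in U,\ \check\gamma\in V\}=\Gamma\cap S_T(U,V',\theta).
\]
The set $p^{-1}(U)$ is right $M$-invariant, which is compatible with the ambiguity $k_1ak_2=(k_1m)a(m^{-1}k_2)$.

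\textbf{Step 2 (counting).} The Haar measure in polar coordinates is $dk_1\,\delta(a)\,da\,dk_2$ with $\delta(\exp H)=\prod_{\alpha}\sinh(\alpha(H))^{\dim\frak g_\alpha}$. This factorizes, so
\[
\mathrm{vol}(S_T(U,V',\theta))=\nu(U)\nu(V')\,\mathrm{vol}(KA^\theta_TK).
\]
Gorodnik--Oh state that for a lattice $\Gamma$, $|\Gamma\cap K_1A_TK_2|\sim \mathrm{vol}(K_1A_TK_2)/\mathrm{vol}(G/\Gamma)$ for Borel $K_1,K_2\subset K$ with null boundary. Applying this to $A^\theta_T$ as well (or using well-roundedness as in \cite{GorNev}) gives
\[
|\Gamma\cap S_T|\sim\nu(U)\nu(V)\,\mathrm{vol}(KA^\theta_TK)/\mathrm{vol}(G/\Gamma)
\]
and $|\Gamma^\theta_T|\sim \mathrm{vol}(KA^\theta_TK)/\mathrm{vol}(G/\Gamma)$. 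Dividing yields the claim.

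\textbf{Step 3 (main obstacle).} The statement of \cite[Theorem 1.6]{GorOh} may only cover $A^+_T$, the full Weyl-chamber ball, and not the restricted cone $A^\theta_T$. In that case I would use the concentration estimate \cite[Proposition 3.3]{BLP}. The volume of $KA_TK$ concentrates, with proportion tending to one, near directions close to $H_{max}$, because $\delta(\exp H)\approx e^{2\rho(H)}$ and $\rho$ is maximized on the unit sphere only at $H_{max}$. Consequently $|\Gamma^\theta_T|/|\Gamma_T|\to1$ and $|\Gamma_T\setminus\Gamma^\theta_T|=o(|\Gamma_T|)$. This last estimate requires an upper count for lattice points in $K(A_T\setminus A^\theta_T)K$. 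I would obtain it by the standard well-roundedness argument: thicken by a small neighborhood $\mathcal O$ of $e$, bound $|\Gamma\cap E|\le \mathrm{vol}(\mathcal O E)/\mathrm{vol}(\mathcal O)$, and note that $\mathcal O E$ stays in a slightly larger cone complement of volume $o(\mathrm{vol}(KA_TK))$. Then
\[
\frac{|\{\gamma\in\Gamma^\theta_T:\hat\gamma\in U,\check\gamma\in V\}|}{|\Gamma^\theta_T|}=\frac{|\{\gamma\in\Gamma_T^+:\dots\}|+o(|\Gamma_T|)}{|\Gamma_T|(1+o(1))}\to\nu(U)\nu(V).
\]
The case $\theta$ large is exactly Gorodnik--Oh's bisector count, and walls have measure zero. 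The delicate point is checking that the boundary of $S_T$ is negligible, so that the hypothesis $\nu(\partial U)=\nu(\partial V)=0$ gives the required well-roundedness near the singular set where $\mathbf b$ is discontinuous.
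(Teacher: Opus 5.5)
Your proposal is correct and is essentially the paper's proof. The paper likewise uses the opposition involution to rewrite the set as $\Gamma^\theta_T\cap\tilde U\,\overline{A^+}M\,m_0^{-1}\tilde V^{-1}$, applies \cite[Theorem 1.6]{GorOh} on the full ball $G_T$ (whose hypothesis $\mu(\partial\Omega_i)=0$ already settles the null-boundary issue you flag at the end), and discards $(\Gamma\cap G_T)\setminus\Gamma^\theta_T$ by the concentration estimate \cite[Proposition 3.3]{BLP}, exactly as in your Step~3. The only differences are cosmetic: the paper normalizes with \cite{EskMcM}, $|\Gamma\cap G_T|\,\mathrm{vol}(G/\Gamma)\sim\mathrm{vol}(G_T)$, which you can also read off from the bisector count with $U=V=K/M$, and your tentative direct application to $A^\theta_T$ in Step~2 is not needed.
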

\begin{proof}
Let $\tilde U=p^{-1}(U)$ and $\tilde V=p^{-1}(V)$. 
Let $N(A)$ be the normalizer of $A$ in $G$ and let $M'=N(A)\cap K$. Recall that the Weyl group is the quotient $W=M'/M$ and that it contains a unique element $s_0$ which sends $\frak{a}^{+}$ to $-\frak{a}^{+}$.
Let $m_0\in M'$ such that $m_0M=s_0$. Recall that the \emph{opposition involution}
\[
	\iota:\frak{a}^+\to\frak{a}^+,\quad H\mapsto -\mathrm{Ad}(m_0)H
\]
satisfies $\exp(-H)=m_0^{-1}\exp(\iota H)m_0$  for all $H\in\frak{a}^+$.
Let $g\in KA^{+}K$. It means there exists $k,l\in K$ and $H\in\frak a^{+}$ such that $g=k\exp(H)l$. Using the identity
$$g^{-1}=l^{-1}\exp(-H)k^{-1}=l^{-1}m_0^{-1}\exp(\iota H)m_0k^{-1},$$ 
it is elementary to check the equality
$$
\{g\in KA^{+}K: {\hat g}\in U\,\, \mbox{and}\,\, {\check{g}}\in V \}=\tilde UA^+m_0^{-1}(\tilde V)^{-1}.
$$
Let $T>0$. Using twice the inclusion $\Gamma^{\theta}_{T}\subset KA^{+}K$ and the above equality we obtain:
\begin{align*}
&\{\gamma\in\Gamma^{\theta}_T: {\hat \gamma}\in U\,\, \mbox{and}\,\, {\check{\gamma}}\in V \}\\
&=\Gamma^{\theta}_T\cap\{g\in KA^{+}K : {\hat g}\in U\,\, \mbox{and}\,\, {\check{g}}\in V \}\\
&=\Gamma^{\theta}_T\cap\tilde UA^+m_0^{-1}(\tilde V)^{-1}\\
&=\Gamma^{\theta}_T\cap\tilde U\overline{A^+}m_0^{-1}(\tilde V)^{-1}.
\end{align*}
In order to apply \cite[Theorem 1.6]{GorOh}, we put $\Omega_1=\tilde U$ and $\Omega_2=m_0^{-1}(\tilde V)^{-1}$. On one hand we have:
\begin{align*}
\tilde U\overline{A^+}m_0^{-1}(\tilde V)^{-1}&=\tilde UM\overline{A^+}m_0^{-1}(\tilde V)^{-1}=\tilde U\overline{A^+}Mm_0^{-1}(\tilde V)^{-1}\\
					&=\Omega_1\overline{A^+}M\Omega_2.
\end{align*} 
On the other hand, for $i=1,2$, $\mu(\partial\Omega_i)=0$ because $\nu(\partial U)=\nu(\partial V)=0$. Moreover, $\mu(\Omega_1 M)=\mu(\tilde U M)=\mu(\tilde U)=\nu(U)$, and, as $M$ is normal in $M'$,
\begin{align*}
\mu(M\Omega_2)&=\mu(Mm_0^{-1}(\tilde V)^{-1})=\mu(m_0Mm_0^{-1}(\tilde V)^{-1})\\
              &=\mu(M(\tilde V)^{-1})=\mu(\tilde V M)=\mu(\tilde V)=\nu(V).
\end{align*}
According to \cite[Theorem 1.6]{GorOh},
\[
\lim_{T\to\infty}
\frac{|(\Gamma\cap G_T)\cap\Omega_1\overline{A^+}M\Omega_2|\,\mathrm{vol}(G/\Gamma)}{\mathrm{vol}(G_T)}
=\nu(U)\nu(V).
\]
Notice that
\[
\frac{|(\Gamma\cap G_T)\setminus \Gamma^{\theta}_T|+|\Gamma^{\theta}_T|}{|\Gamma\cap G_T|}=1.
\]
According  to  \cite[Proposition 3.3 and its proof]{BLP}
\[
	\lim_{T\to\infty}\frac{|(\Gamma\cap G_T)\setminus \Gamma^{\theta}_T|}{|\Gamma\cap G_T|}=0.
\]
Finally, according to \cite{EskMcM}
\begin{equation}\label{equation: EskMcM}
	|\Gamma\cap G_T|\cdot \mathrm{vol}(G/\Gamma)\sim \mathrm{vol}(G_T).
\end{equation}
Combining these facts with the preceding normalized limit proves the lemma.
	
\end{proof}

\section[Normalized coefficients near infinity]{Behavior of normalized coefficients of continuous functions near infinity}\label{section: behavior near infinity} Recall that the Howe--Moore theorem \cite{HM} states that every unitary representation of a connected noncompact simple Lie group with finite center and without nonzero invariant vector has all its matrix coefficients vanishing at infinity. In order to study the asymptotic behavior of a coefficient of the quasi-regular representation at infinity, we  normalize it with the coefficient defined by the constant function on the boundary $B=K/M$:
$$
g\mapsto\frac{\langle\pi(g)\varphi,\psi\rangle}{\langle\pi(g)\mathbbm 1_{B},\mathbbm 1_{B}\rangle}=\frac{\langle\pi(g)\varphi,\psi\rangle}{\Xi(g)}.
$$
For $\psi\in L^{2}(B,\nu)$, recall that the normalized square-root of the Poisson kernel 	is defined as
$$
\mathcal{P}_0\psi(g)=\frac{\langle\pi(g)\mathbbm 1_B,\overline{\psi}\rangle}{\Xi(g)}.
$$
Notice that $\mathcal{P}_0\psi$ is $K$-right invariant. Hence it can also  be viewed as a function on $X=G/K$.
For continuous $\psi$, the normalized Poisson transform recovers the boundary value along regular approaches to the boundary. More generally, for continuous $\varphi$ and $\psi$, the difference
\[
\frac{\langle\pi(g)\varphi,\psi\rangle}{\Xi(g)}
-\varphi(\check g)\overline{\psi(\hat g)}
\]
tends uniformly to zero as $L(g)\to\infty$ in a fixed cone bounded away from the walls of a Weyl chamber. The next lemma makes this statement precise. 
\begin{lemma}\label{lemma: behavior of normalized smooth coefficients} With the notation as above, let $G$ be a connected non-compact semi-simple Lie group with finite center. We fix a left $K$-invariant Riemannian metric  $d$ on $B=K/M$. For $b\in B$ and $r>0$, we denote 
$$
B(b,r)=\{a\in B: d(a,b)<r\}
$$
the open ball of center $b$ and radius $r$.
 Let $\varphi,\psi:B\to\mathbb C$ be bounded Borel functions. For any $g\in KA^{+}K$,
we define 
$$
R_{\varphi,\psi}(g)=\frac{\langle\pi(g)\varphi,\psi\rangle}{\Xi(g)}-\varphi\left({\check{g}}\right)\overline{\psi({\hat g})}.
$$ 
The following statements are true.
\begin{enumerate}

\item
\begin{align*}
R_{\varphi,\psi}(g)&=\frac{\langle\pi(g)\varphi,\psi-\psi({\hat g})\mathbbm 1_B\rangle}{\Xi(g)}\\
&+\overline{\psi({\hat g})}\frac{\left\langle\varphi-\varphi\left({\hat {g^{-1}}}\right)\mathbbm 1_B,\pi(g^{-1})\mathbbm 1_B\right\rangle}{\Xi(g)},
\end{align*}

\item
\begin{align*}
\left|R_{\varphi,\psi}(g)\right|&\leq\|\varphi\|_{\infty}\mathcal{P}_0|\psi-\psi({\hat g})\mathbbm 1_B|(g)\\
&+|\psi({\hat g})|\mathcal{P}_0\left|\varphi-\varphi\left({\check{g}}\right)\mathbbm 1_B\right|(g^{-1}).
\end{align*}

\item
\begin{align*}
\mathcal{P}_0|\psi-\psi(\hat{g})\mathbbm 1_B|(g)&\leq\sup_{b\in B(\hat g,r)}|\psi(b)-\psi(\hat{g})|\\
&+\sup_{b\in B}|\psi(b)-\psi(\hat{g})|\mathcal{P}_0\mathbbm 1_{B\setminus B(\hat g,r)}(g).
\end{align*}

\item Assume $\theta>0$ is small enough so that $\overline{\frak a^{\theta}}\subset {\frak a}^{+}\cup\{0\}$. If $\psi$ is continuous  then
$$
\lim_{T\to\infty}\sup_{g\in (KA^{\theta}K)\setminus G_{T}}\mathcal{P}_0|\psi-\psi(\hat{g})\mathbbm 1_B|(g)=0.
$$

\item Assume $\theta>0$ is small enough so that $\overline{\frak a^{\theta}}\subset {\frak a}^{+}\cup\{0\}$. If $\varphi$ and $\psi$ are continuous  then
$$
\lim_{T\to\infty}\sup_{g\in (KA^{\theta}K)\setminus G_{T}}\left|R_{\varphi,\psi}(g)\right|=0.
$$
\end{enumerate}
\end{lemma}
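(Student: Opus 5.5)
Statements (1)–(3) are algebraic; (4) is the core asymptotic estimate, and (5) follows from (2) and (4).

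\textbf{Statement (1).} I would expand $\langle\pi(g)\varphi,\psi\rangle$ by writing
\[
\psi=(\psi-\psi(\hat g)\mathbbm 1_B)+\psi(\hat g)\mathbbm 1_B .
\]
The first piece gives the first term of (1). The second piece gives $\overline{\psi(\hat g)}\langle\pi(g)\varphi,\mathbbm 1_B\rangle=\overline{\psi(\hat g)}\langle\varphi,\pi(g^{-1})\mathbbm 1_B\rangle$. Next write
\[
\varphi=(\varphi-\varphi(\check g)\mathbbm 1_B)+\varphi(\check g)\mathbbm 1_B ,
\]
using $\check g=\bold b(g^{-1})=\hat{g^{-1}}$. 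The constant part contributes $\varphi(\check g)\overline{\psi(\hat g)}\,\Xi(g)$, because $\langle\mathbbm 1_B,\pi(g^{-1})\mathbbm 1_B\rangle=\Xi(g)$. Dividing by $\Xi(g)$ and subtracting $\varphi(\check g)\overline{\psi(\hat g)}$ gives (1).

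\textbf{Statement (2).} The key facts are that $\pi(g)$ maps nonnegative functions to nonnegative functions, and $\Xi(g)=\Xi(g^{-1})$. For the first term of (1), bound $|\pi(g)\varphi|\le\|\varphi\|_\infty\pi(g)\mathbbm 1_B$ pointwise. This gives
\[
|\langle\pi(g)\varphi,\psi-\psi(\hat g)\mathbbm 1_B\rangle|\le\|\varphi\|_\infty\langle\pi(g)\mathbbm 1_B,|\psi-\psi(\hat g)\mathbbm 1_B|\rangle .
\]
After division by $\Xi(g)$, the right side is $\|\varphi\|_\infty\,\mathcal P_0|\psi-\psi(\hat g)\mathbbm 1_B|(g)$, since the function inside is real. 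The second term is handled the same way, with $g^{-1}$ in place of $g$ and the fact $\Xi(g^{-1})=\Xi(g)$.

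\textbf{Statement (3).} Split $B$ into $B(\hat g,r)$ and its complement. The functional $f\mapsto\mathcal P_0 f(g)$ is positive and sends $\mathbbm 1_B$ to $1$. So on the ball we bound by the supremum of $|\psi(b)-\psi(\hat g)|$ over the ball, and on the complement by the global supremum times $\mathcal P_0\mathbbm 1_{B\setminus B(\hat g,r)}(g)$.

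\textbf{Statement (4).} Since $\psi$ is uniformly continuous, (3) reduces (4) to the following Fatou-type concentration claim: for each fixed $r>0$,
\[
\sup_{g\in KA^\theta K\setminus G_T}\mathcal P_0\mathbbm 1_{B\setminus B(\hat g,r)}(g)\to 0 \quad\text{as } T\to\infty .
\]
This is the main obstacle.

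To prove it, write $g=k\exp(H)l$ and use $K$-invariance of $\nu$ and of the metric $d$. Then $\pi(l)\mathbbm 1_B=\mathbbm 1_B$, and the quantity becomes
\[
\frac{\langle\pi(\exp H)\mathbbm 1_B,\mathbbm 1_{B\setminus B(eM,r)}\rangle}{\Xi(\exp H)} .
\]
Here $\hat g=kM$, and $H$ ranges over $\frak a^\theta$ with $\|H\|\ge T$. The density of $\pi(\exp H)\mathbbm 1_B$ is $e^{-\rho(H)}$ times a power of the Poisson kernel, via the Iwasawa $A$-projection.

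I would pass to the open dense Bruhat cell $\bar N\to B$, $\bar n\mapsto\bar nP$. On this cell, $\exp(-H)$ acts by conjugation, contracting each $\bar n$ toward $eM$ at rates $e^{-\alpha(H)}$ with $\alpha(H)\ge c\|H\|$ for all $\alpha\in\Sigma^+$, uniformly on the cone $\frak a^\theta$. (Here I must check that this direction matches $\hat g=kM$.) The square-root density then concentrates near $eM$.

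For the comparison with $\Xi$, I would use the Harish-Chandra asymptotic from Lemma HC:
\[
\Xi(\exp H)\asymp e^{-\rho(H)}\prod_{\alpha}(1+\alpha(H)) ,
\]
where the polynomial factor has degree at least $1$. The contribution from outside the ball should be $O(e^{-\rho(H)})$, with no polynomial factor, because the integrand there is $e^{-\rho(H)}$ times an integrable function uniformly in $H$. Both claims need checking; if the polynomial factor could have degree $0$, this comparison would give only boundedness, not decay. Assuming them, the ratio is $O\bigl(1/\prod_\alpha(1+\alpha(H))\bigr)\to 0$, uniformly on the cone.

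A cleaner alternative is to compute directly in $\bar N$-coordinates. By dominated convergence after rescaling, the integral over the complement of a neighbourhood of $e$ is $o(\Xi)$.

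\textbf{Statement (5).} Combine (2) with (4), applied to $\psi$ at $g$ and to $\varphi$ at $g^{-1}$. The element $g^{-1}$ lies in $KA^\theta K\setminus G_T$ whenever $g$ does, because $L$ is symmetric and $A^\theta$ is stable under the opposition involution $\iota$. The coefficients $\|\varphi\|_\infty$ and $|\psi(\hat g)|$ are bounded, so $\sup|R_{\varphi,\psi}|\to 0$.
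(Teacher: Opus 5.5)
Your proofs of (1), (2), (3) and (5) match the paper's. So does your reduction of (4), via (3) and uniform continuity of $\psi$, to the concentration estimate $\sup_{g\in KA^{\theta}K\setminus G_T}\mathcal P_0\mathbbm 1_{B\setminus B(\hat g,r)}(g)\to 0$. The paper, however, does not prove that estimate. It imports it from \cite[Lemma 3.2]{BLP}, which supplies a nonincreasing $h_r$ with $h_r(s)\to 0$ and $\mathcal P_0\mathbbm 1_{\{b:\,d(b,\hat g)\geq r\}}(g)\leq h_r(L(g))$ for all $g\in KA^{\theta}K$. Your self-contained sketch of this estimate is where the genuine gap lies: it only works in real rank one.

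The failing step is the one you flagged. You claim that $\langle\pi(\exp H)\mathbbm 1_B,\mathbbm 1_{B\setminus B(eM,r)}\rangle=O_r(e^{-\rho(H)})$, because off the ball the density is $e^{-\rho(H)}$ times a fixed integrable function. This is false in higher rank.

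Take $G=SL_2(\mathbb R)\times SL_2(\mathbb R)$ and write $\exp H=(a_{t_1},a_{t_2})$, with $a_t$ the hyperbolic translation of length $t$, so that $\rho(H)=(t_1+t_2)/2$. Everything factorizes:
\begin{itemize}
\item $\Xi(\exp H)\asymp t_1t_2e^{-\rho(H)}$.
\item $B\setminus B(eM,r)$ contains a product $(S^1\setminus I)\times S^1$ with $I$ a small arc.
\item The mass of that product is bounded below by $c_r\,e^{-t_1/2}\cdot t_2e^{-t_2/2}=c_r\,t_2\,e^{-\rho(H)}$.
\end{itemize}
Near the points $(\theta_1,eM)$ with $\theta_1\notin I$, the density is of size $e^{(t_2-t_1)/2}$, which is far larger than $e^{-\rho(H)}$. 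So the off-ball mass also carries a polynomial factor. The ratio still tends to $0$ here, but only because that factor has lower degree than the one in $\Xi$. Proving this loss of a factor, uniformly on $\frak a^{\theta}$, is exactly what your sketch omits.

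Your fallback, dominated convergence in $\bar N$-coordinates, does not close the gap either. After rescaling, the off-ball integral $e^{\rho(H)}\langle\pi(\exp H)\mathbbm 1_B,\mathbbm 1_{B\setminus B(eM,r)}\rangle$ does not tend to $0$; in the example above it grows like $t_2$. Moreover, the natural majorant $e^{-\rho(H(\bar n))}$ is not integrable on $\bar N$: if it were, dominated convergence would give $\Xi(\exp H)\asymp e^{-\rho(H)}$ with no polynomial factor. What is really needed is a comparison of the growth rates of two divergent integrals, and dominated convergence does not provide that.

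In rank one your argument does work: off the ball the density is at most $C_r e^{-\rho(H)}$, while $\Xi\asymp L\,e^{-\rho(H)}$ by Lemma~\ref{lemma: HC}. For general rank you should either cite \cite[Lemma 3.2]{BLP}, as the paper does, or supply a genuine root-by-root estimate.
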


\begin{proof} In order to prove the first statement we compute:
\begin{align*}
\frac{\langle\pi(g)\varphi,\psi\rangle}{\Xi(g)}-\varphi\left({\check{g}}\right)
\overline{\psi({\hat g})}&=\frac{\langle\pi(g)\varphi,\psi-\psi({\hat g})\mathbbm 1_B+\psi({\hat g})\mathbbm 1_B\rangle}{\Xi(g)}\\
&\,\,\,\,\,\,\,\,-\varphi\left({\check{g}}\right)\overline{\psi({\hat g})}\\
&=\frac{\langle\pi(g)\varphi,\psi-\psi({\hat g})\mathbbm 1_B\rangle}{\Xi(g)}+\overline{\psi({\hat g})}\frac{\langle\pi(g)\varphi,\mathbbm 1_B\rangle}{\Xi(g)}\\
&\,\,\,\,\,\,\,\,-\varphi\left({\check{g}}\right)\overline{\psi({\hat g})}\\
&=\frac{\langle\pi(g)\varphi,\psi-\psi({\hat g})\mathbbm 1_B\rangle}{\Xi(g)}+\overline{\psi({\hat g})}\frac{\langle\varphi,\pi(g^{-1})\mathbbm 1_B\rangle}{\Xi(g)}\\
&\,\,\,\,\,\,\,\,-\varphi\left({\check{g}}\right)\overline{\psi({\hat g})}\frac{\langle\mathbbm 1_B,\pi(g^{-1})\mathbbm 1_B\rangle}{\Xi(g)}\\
&=\frac{\langle\pi(g)\varphi,\psi-\psi({\hat g})\mathbbm 1_B\rangle}{\Xi(g)}\\
&\,\,\,\,\,\,\,\,+\overline{\psi({\hat g})}\frac{\left\langle\varphi-\varphi\left({\check{g}}\right)\mathbbm 1_B,\pi(g^{-1})\mathbbm 1_B\right\rangle}{\Xi(g)}.
\end{align*}
Taking norms proves the next statement:
\begin{align*}
\left|\frac{\langle\pi(g)\varphi,\psi\rangle}{\Xi(g)}-\varphi\left({\check{g}}\right)
\overline{\psi({\hat g})}\right|&\leq\|\varphi\|_{\infty}\frac{\langle\pi(g)\mathbbm 1_B,|\psi-\psi({\hat g})\mathbbm 1_B|\rangle}{\Xi(g)}\\
&\,\,\,\,\,\,\,\,+|\psi({\hat g})|\frac{\langle|\varphi-\varphi\left({\check{g}}\right)\mathbbm 1_B|,\pi(g^{-1})\mathbbm 1_B\rangle}{\Xi(g)}\\
&=\|\varphi\|_{\infty}\mathcal{P}_0|\psi-\psi({\hat g})\mathbbm 1_B|(g)\\
&\,\,\,\,\,\,\,\,+|\psi({\hat g})|\mathcal{P}_0|\varphi-\varphi\left({\check{g}}\right)\mathbbm 1_B|(g^{-1}).
\end{align*}
In order to prove the third statement we define
$$
K(g,b)=\frac{\sqrt{\frac{dg_{*}\nu}{d\nu}(b)}}{\Xi(g)}.
$$
Notice that $K(g,b)>0$ and
$\int_{B}K(g,b)d\nu(b)=1$.
We have
\begin{align*}
\mathcal{P}_0|\psi-\psi(\hat{g})\mathbbm 1_B|(g)&=\int_{B}K(g,b)|\psi(b)-\psi(\hat{g})|d\nu(b)\\
&=\int_{B(\hat{g},r)}K(g,b)|\psi(b)-\psi(\hat{g})|d\nu(b)\\
&+\int_{B\setminus B(\hat g,r)}K(g,b)|\psi(b)-\psi(\hat{g})|d\nu(b)\\
&\leq\sup_{b\in B(\hat g,r)}|\psi(b)-\psi(\hat{g})|\\
&+\sup_{b\in B}|\psi(b)-\psi(\hat{g})|\mathcal{P}_0\mathbbm 1_{B\setminus B(\hat g,r)}(g).
\end{align*}
To prove the next statement, notice first that according to \cite[Lemma 3.2]{BLP}, for each $r>0$ there exists a function $h_r:[0,\infty[\to[0,\infty[$ with the following properties:
\begin{enumerate}
	\item $h_r$ is non-increasing,
	\item $\lim_{s\to\infty}h_r(s)=0,$
	\item $\forall g\in KA^{\theta}K,$
\[
	\mathcal{P}_0\mathbbm 1_{\{b\in B :\, d(b,\hat{g})\geq r\}}(g)\leq h_r(L(g)).
\]
\end{enumerate}
where $d$ and $\theta$ are as specified above. If $\epsilon>0$ is given, as $\psi$ is uniformly continuous on the compact space $B$, there exists $r>0$ small enough so that for any $g\in KA^{+}K$,
$$
\sup_{b\in B(\hat g,r)}|\psi(b)-\psi(\hat{g})|<\epsilon/2.
$$
Choose $T>0$ sufficiently large that 
$$
2\|\psi\|_\infty h_r(T)<\epsilon/2.
$$
 Applying the previously proved inequality stated in  the lemma, we deduce that
for all $T'\geq T$, for all $g\in(KA^{\theta}K)\setminus G_{T'}$
$$
\mathcal{P}_0|\psi-\psi(\hat{g})\mathbbm 1_B|(g)<\epsilon.
$$
As already mentioned, the opposition involution preserves $\frak a^\theta$, moreover, since $L(g)=L(g^{-1})$, statements (2) and (4), applied to $g$ and $g^{-1}$, prove (5).
\end{proof}

\section{Proof of the asymptotic Schur orthogonality relations}
We start with an elementary and useful lemma.

\begin{lemma}\label{lemma: balls and coronae} Let $X$ be a set and $(B_{T})_{T>0}$ be an increasing family of subset of $X$; that is, if $S<T$, then $B_{S}\subset B_{T}\subset X$. We assume each set $B_{T}$ is finite (and non-empty). Suppose
$$
\lim_{T\to\infty}\frac{|B_{T}|}{|B_{T+1}|}=c<1.
$$ 
Let $f:X\to\mathbb C$ be a function.
Then
$$
\lim_{T\to\infty}\frac{1}{|B_{T}|}\sum_{x\in B_{T}}f(x)=l,
$$ 
if and only if
$$
\lim_{T\to\infty}\frac{1}{|B_{T+1}\setminus B_{T}|}\sum_{x\in B_{T+1}\setminus B_{T}}f(x)=l.
$$ 
\end{lemma}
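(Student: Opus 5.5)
Write $S_T=\sum_{x\in B_T}f(x)$, $b_T=|B_T|$, and $\Delta_T=B_{T+1}\setminus B_T$. Since the family is increasing, $|\Delta_T|=b_{T+1}-b_T$ and $\sum_{x\in\Delta_T}f(x)=S_{T+1}-S_T$. Note that $c\le 1$ automatically, and $c<1$ gives $|\Delta_T|/b_{T+1}\to 1-c>0$. In particular $|\Delta_T|>0$ for large $T$, so the corona averages are eventually defined. The plan is to prove both implications by expressing one type of average as a combination of the other, with coefficients controlled by $c$.

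\emph{Balls imply coronae.} Suppose $S_T/b_T\to l$. Write
\[
\frac{S_{T+1}-S_T}{b_{T+1}-b_T}=\frac{\frac{S_{T+1}}{b_{T+1}}-\frac{b_T}{b_{T+1}}\cdot\frac{S_T}{b_T}}{1-\frac{b_T}{b_{T+1}}}.
\]
As $T\to\infty$, the numerator tends to $l-cl$ and the denominator tends to $1-c\neq0$. Hence the quotient tends to $l$.

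\emph{Coronae imply balls.} This is the harder direction and the main obstacle, because it requires a telescoping argument. Suppose $\frac{1}{|\Delta_T|}\sum_{\Delta_T}f\to l$, and fix $T_0$ large. For $T\ge T_0$ choose the integer $n\geq 0$ with $T-n\in[T_0,T_0+1)$. Then $B_T$ is the disjoint union of $B_{T-n}$ and the coronae $\Delta_{T-k}$ for $k=1,\dots,n$. This gives
\[
S_T-l\,b_T=\bigl(S_{T-n}-l\,b_{T-n}\bigr)+\sum_{k=1}^{n}\Bigl(\sum_{\Delta_{T-k}}f-l\,|\Delta_{T-k}|\Bigr).
\]
Given $\epsilon>0$, take $T_0$ so large that every corono term with index at least $T_0$ has modulus at most $\epsilon|\Delta_{T-k}|$. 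The second sum is then at most $\epsilon\, b_T$ in modulus.

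For the first term, use $c<1$: for $T_0$ large we have $b_s/b_{s+1}\le c'<1$ for all $s\ge T_0$. Hence $b_{T-n}\le c'^{\,n}b_T$. Moreover $|S_{T-n}-l b_{T-n}|\le C_{T_0}$, a constant depending only on $T_0$, since $B_{T-n}\subset B_{T_0+1}$ is a fixed finite set. Because $b_T\ge b_{T_0}c'^{-n}$, we have $b_T\to\infty$ as $T\to\infty$, so this term divided by $b_T$ tends to $0$. Thus $\limsup_{T\to\infty}|S_T/b_T-l|\le\epsilon$ for every $\epsilon>0$, which proves the claim.
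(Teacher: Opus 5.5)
Your proof is correct and uses the same identity as the paper, namely the decomposition $B_{T+1}=B_T\sqcup(B_{T+1}\setminus B_T)$, which writes the ball average at $T+1$ as a combination of the ball average at $T$ and the corona average, with weights tending to $c$ and $1-c$. The paper stops at that identity with ``as $1-c>0$ this proves the lemma''; your telescoping argument for the coronae-to-balls direction, where $b_T\to\infty$ kills the initial block, is a careful unrolling of that identity and supplies the details the paper omits.
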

\begin{proof} We have:
\begin{align*}
\frac{1}{|B_{T+1}|}\sum_{x\in B_{T+1}}f(x)&=\frac{|B_{T}|}{|B_{T+1}|}\left(\frac{1}{|B_{T}|}\sum_{x\in B_{T}}f(x)\right)\\
&+\frac{|B_{T+1}\setminus B_{T}|}{|B_{T+1}|}\left(\frac{1}{|B_{T+1}\setminus B_{T}|}\sum_{x\in B_{T+1}\setminus B_{T}}f(x)\right).
\end{align*}
As $1-c>0$ this proves the lemma.
\end{proof}

\subsection{Proof of Theorem \ref{theo: main}}
We denote $r=\mathrm{rank}_{\mathbb R}G$ the real rank of $G$. We assume $r\geq 1$.
The exponential growth rate of the Riemannian symmetric space $(X,d_X)$, also called the volume entropy, 
$$
\delta=\lim_{T\to\infty}\frac{\log\mbox{Vol}(B(x_{0},T))}{T}
$$
satisfies
 $$\delta=2\rho(H_{max})>0.$$
To each lattice  $\Gamma$ in $G$ corresponds a constant $c>0$, such that for any $\theta>0$, 
$$
|\Gamma^{\theta}_{T}|\sim c T^{(r-1)/2}\cdot e^{\delta T}.
$$
See \cite[Sections 3.2 and 3.5]{BLP}.

In particular,
$$
\lim_{T\to\infty}\frac{|\Gamma^{\theta}_{T}|}{|\Gamma^{\theta}_{T+1}|}=1/e^{\delta}<1.
$$
Our first goal  is to prove the following key lemma.

\begin{lemma}\label{lemma: Schur on theta-coronae} With the notation as above, let $\theta>0$ be small enough so that $\overline{\frak a^{\theta}}\subset {\frak a}^{+}\cup\{0\}$. For any continuous functions $\varphi,\varphi',\psi,\psi'$ on $G/P$,
$$
\lim_{T\to\infty}\frac{1}{|\Gamma^{\theta}_{T+1}\setminus\Gamma^{\theta}_{T}|}\sum_{\gamma\in \Gamma^{\theta}_{T+1}\setminus\Gamma^{\theta}_{T}}\frac{\langle\pi(\gamma)\varphi,\psi\rangle\overline{\langle\pi(\gamma)\varphi',\psi'\rangle}}{\Xi^{2}(\gamma)}=\langle \varphi,\varphi'\rangle\overline{\langle\psi,\psi'\rangle}.
$$
\end{lemma}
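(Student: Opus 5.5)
The plan is to replace each normalized coefficient by its ``boundary value'' using Lemma \ref{lemma: behavior of normalized smooth coefficients}, and then to apply the double equidistribution of Proposition \ref{prop: equidistribution away from the walls}, transferred from balls to coronae by Lemma \ref{lemma: balls and coronae}.

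\textbf{Step 1 (uniform approximation on the corona).} For $\gamma\in\Gamma^{\theta}_{T+1}\setminus\Gamma^{\theta}_{T}$ we have $\gamma\in (KA^{\theta}K)\setminus G_T$. By Lemma \ref{lemma: behavior of normalized smooth coefficients}(1), we write
\[
\frac{\langle\pi(\gamma)\varphi,\psi\rangle}{\Xi(\gamma)}=\varphi(\check\gamma)\overline{\psi(\hat\gamma)}+R_{\varphi,\psi}(\gamma),
\]
and similarly for $(\varphi',\psi')$. By part (5) of that lemma, $\epsilon_T:=\sup_{g\in(KA^\theta K)\setminus G_T}\max(|R_{\varphi,\psi}(g)|,|R_{\varphi',\psi'}(g)|)\to0$. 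The normalized coefficients themselves are bounded by $\|\varphi\|_\infty\|\psi\|_\infty$, since $|\langle\pi(g)\varphi,\psi\rangle|\le\|\varphi\|_\infty\|\psi\|_\infty\langle\pi(g)\mathbbm 1_B,\mathbbm 1_B\rangle$ by positivity of the kernel. Expanding the product, the summand equals
\[
\varphi(\check\gamma)\overline{\psi(\hat\gamma)}\,\overline{\varphi'(\check\gamma)}\psi'(\hat\gamma)+O(\epsilon_T),
\]
with the implied constant depending only on the sup norms. Consequently the corona average differs from the corona average of $F(\hat\gamma,\check\gamma)$ by $O(\epsilon_T)\to0$, where
\[
F(a,b)=\varphi(b)\overline{\varphi'(b)}\,\overline{\psi(a)}\psi'(a).
\]
This $F$ is continuous on $G/P\times G/P$.

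\textbf{Step 2 (equidistribution on coronae).} Proposition \ref{prop: equidistribution away from the walls} gives convergence of the ball averages of $F(\hat\gamma,\check\gamma)$ over $\Gamma^\theta_T$ to
\[
\int F\,d\nu\otimes d\nu=\Big(\int\varphi\overline{\varphi'}\,d\nu\Big)\Big(\int\overline{\psi}\psi'\,d\nu\Big)=\langle\varphi,\varphi'\rangle\overline{\langle\psi,\psi'\rangle}.
\]
The family $B_T=\Gamma^\theta_T$ satisfies $|B_T|/|B_{T+1}|\to e^{-\delta}<1$ by the asymptotic recalled above. Hence Lemma \ref{lemma: balls and coronae} (the ``only if'' direction) transfers the limit to the coronae $\Gamma^\theta_{T+1}\setminus\Gamma^\theta_T$. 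Combining this with Step 1 proves the lemma.

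\textbf{Main obstacle.} The only delicate point is the uniformity in Step 1, namely that the error $R$ vanishes uniformly on the whole corona, including near the walls of the cone. This is exactly why one restricts to $\theta$ with $\overline{\frak a^\theta}\subset\frak a^+\cup\{0\}$: it lets us invoke Lemma \ref{lemma: behavior of normalized smooth coefficients}(5), which rests on the concentration estimate of \cite[Lemma 3.2]{BLP}. One should also check that $\check\gamma$ appears in the correct slot, so that $F$ integrates to the product of the two inner products. This holds because $\nu\otimes\nu$ is a product measure, so each factor integrates separately.
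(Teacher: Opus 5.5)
Your proposal is correct and follows essentially the same route as the paper. You split each normalized coefficient as $\varphi(\check\gamma)\overline{\psi(\hat\gamma)}+R_{\varphi,\psi}(\gamma)$, handle the main term $F(\hat\gamma,\check\gamma)$ by Proposition~\ref{prop: equidistribution away from the walls} transferred to coronae via Lemma~\ref{lemma: balls and coronae}, and control the error terms uniformly on the corona by part (5) of Lemma~\ref{lemma: behavior of normalized smooth coefficients}. One minor point: that splitting is simply the definition of $R_{\varphi,\psi}$, not part (1) of the lemma.
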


\begin{proof}
Let  $\varphi,\psi,\varphi',\psi'$ be continuous functions on $G/P$. Recall that for any $g\in KA^{+}K$,
$$
\frac{\langle\pi(g)\varphi,\psi\rangle}{\Xi(g)}=\varphi\left({\check{g}}\right)\overline{\psi({\hat g})}+R_{\varphi,\psi}(g).
$$
We define the function
$$
F:G/P\times G/P\to\mathbb C
$$
$$
(a,b)\mapsto\psi'(a)\overline{\psi(a)}\varphi(b)\overline{\varphi'(b)},
$$
and we obtain, for any $g\in KA^{+}K$,
\begin{align*}
\frac{\langle\pi(g)\varphi,\psi\rangle\overline{\langle\pi(g)\varphi',\psi'\rangle}}{\Xi^{2}(g)}&=F(\hat{g},\check{g})\\
&+R_{\varphi,\psi}(g)\overline{\varphi'\left({\check{g}}\right)}\psi'({\hat g})+\overline{R_{\varphi',\psi'}(g)}\varphi\left({\check{g}}\right)\overline{\psi({\hat g})}\\
&+R_{\varphi,\psi}(g)\overline{R_{\varphi',\psi'}(g)}.
\end{align*}
As the function $F$ is continuous, we may apply Proposition \ref{prop: equidistribution away from the walls}  and Lemma \ref{lemma: balls and coronae} to deduce that
$$
\lim_{T\to\infty}\frac{1}{|\Gamma^{\theta}_{T+1}\setminus\Gamma^{\theta}_{T}|}\sum_{\gamma\in \Gamma^{\theta}_{T+1}\setminus\Gamma^{\theta}_{T}}F(\hat{\gamma},\check{\gamma})=\langle \varphi,\varphi'\rangle\overline{\langle\psi,\psi'\rangle}.
$$ 
As the functions $\varphi,\psi,\varphi',\psi'$ are continuous on the compact space $G/P$ they are bounded. Hence, according to point (5) of Lemma \ref{lemma: behavior of normalized smooth coefficients} (which applies because of the hypothesis on $\theta$) the remaining means
\begin{align*}
&\frac{1}{|\Gamma^{\theta}_{T+1}\setminus\Gamma^{\theta}_{T}|}\sum_{\gamma\in \Gamma^{\theta}_{T+1}\setminus\Gamma^{\theta}_{T}}\left(R_{\varphi,\psi}(\gamma)\overline{\varphi'\left({\check{\gamma}}\right)}\psi'({\hat \gamma})+\overline{R_{\varphi',\psi'}(\gamma)}\varphi\left({\check{\gamma}}\right)\overline{\psi({\hat \gamma})}\right)\\
&+\frac{1}{|\Gamma^{\theta}_{T+1}\setminus\Gamma^{\theta}_{T}|}\sum_{\gamma\in \Gamma^{\theta}_{T+1}\setminus\Gamma^{\theta}_{T}}R_{\varphi,\psi}(\gamma)\overline{R_{\varphi',\psi'}(\gamma)}
\end{align*}
tend to zero if $T$ tends to infinity. 
\end{proof}

The following statement is a technical and more general version of the main theorem. 

\begin{theo}\label{theo: main technical}  Let $G$ be a non-compact connected semi-simple Lie group with finite center. Let $\Gamma$ be a lattice in $G$. Suppose $E$ is a subset of $\Gamma$ and there exists $\theta>0$ such that
$$\Gamma^{\theta}\subset E\subset\Gamma.$$
Then, for any continuous functions $\varphi,\psi,\varphi',\psi'$ on $G/P$,
$$
\lim_{T\to\infty}\frac{1}{|E_{T}|}\sum_{\gamma\in E_{T}}\frac{\langle\pi(\gamma)\varphi,\psi\rangle\overline{\langle\pi(\gamma)\varphi',\psi'\rangle}}{\Xi^{2}(\gamma)}=\langle \varphi,\varphi'\rangle\overline{\langle\psi,\psi'\rangle},
$$
and 
$$
\lim_{T\to\infty}\frac{1}{|E_{T+1}\setminus E_{T}|}\sum_{\gamma\in E_{T+1}\setminus E_{T}}\frac{\langle\pi(\gamma)\varphi,\psi\rangle\overline{\langle\pi(\gamma)\varphi',\psi'\rangle}}{\Xi^{2}(\gamma)}=\langle \varphi,\varphi'\rangle\overline{\langle\psi,\psi'\rangle},
$$
\end{theo}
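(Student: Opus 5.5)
Reduce to Lemma \ref{lemma: Schur on theta-coronae} by showing that the lattice points of $E$ outside the cone $KA^{\theta}K$ are negligible. Denote by $f(\gamma)$ the summand
$$f(\gamma)=\frac{\langle\pi(\gamma)\varphi,\psi\rangle\overline{\langle\pi(\gamma)\varphi',\psi'\rangle}}{\Xi^{2}(\gamma)}.$$
Since $\Gamma^{\theta'}\subset\Gamma^{\theta}$ for $\theta'<\theta$, I may shrink $\theta$ so that $\overline{\frak a^{\theta}}\subset\frak a^{+}\cup\{0\}$; the hypothesis $\Gamma^{\theta}\subset E$ is preserved. The key observation is that $f$ is bounded on all of $G$. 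Since $\pi(g)$ is positivity preserving, Cauchy--Schwarz-type domination gives $|\langle\pi(g)\varphi,\psi\rangle|\le\|\varphi\|_\infty\|\psi\|_\infty\langle\pi(g)\mathbbm 1_B,\mathbbm 1_B\rangle=\|\varphi\|_\infty\|\psi\|_\infty\,\Xi(g)$. Hence $|f|\le C:=\|\varphi\|_\infty\|\psi\|_\infty\|\varphi'\|_\infty\|\psi'\|_\infty$ everywhere, including on the walls where $\check g,\hat g$ are undefined.

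Next I control the counting. From the proof of Lemma \ref{lemma: half Roblin} (the \cite{BLP} concentration estimate), $|\Gamma_T\setminus\Gamma^\theta_T|=o(|\Gamma_T|)$, so $|E_T\setminus\Gamma^\theta_T|=o(|\Gamma^\theta_T|)$ and $|E_T|\sim|\Gamma^\theta_T|$. Then
$$\frac1{|E_T|}\sum_{E_T}f=\frac{|\Gamma^\theta_T|}{|E_T|}\cdot\frac1{|\Gamma^\theta_T|}\sum_{\Gamma^\theta_T}f+O\!\left(\frac{C|E_T\setminus\Gamma^\theta_T|}{|E_T|}\right).$$
By Lemma \ref{lemma: Schur on theta-coronae} combined with Lemma \ref{lemma: balls and coronae}, applied with $B_T=\Gamma^\theta_T$ and $c=e^{-\delta}<1$, the ball averages over $\Gamma^\theta_T$ converge to $\langle\varphi,\varphi'\rangle\overline{\langle\psi,\psi'\rangle}$. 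This gives the first limit.

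For the coronae statement I apply Lemma \ref{lemma: balls and coronae} with $B_T=E_T$. This requires $|E_T|/|E_{T+1}|\to e^{-\delta}$, which follows from $|E_T|\sim|\Gamma^\theta_T|\sim cT^{(r-1)/2}e^{\delta T}$. The nonemptiness of $E_T$ holds for large $T$, and I shift $T$ if needed.

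The main obstacle is the uniform bound $|f|\le C$ off the cone, since there the approximation of Lemma \ref{lemma: behavior of normalized smooth coefficients} is unavailable. The positivity argument above handles it cleanly, using that $\pi(g)$ is a positive operator composed with multiplication by a positive density. The only other delicate input is that the asymptotic $|\Gamma^\theta_T|\sim|\Gamma_T|$ holds for every $\theta>0$, which is exactly the quoted \cite[Proposition 3.3]{BLP}.
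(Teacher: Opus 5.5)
Your proposal is correct and follows essentially the same route as the paper. Both arguments shrink $\theta$, split $E_T$ into $\Gamma^\theta_T$ and an exceptional set, bound the exceptional summands uniformly via positivity of the kernel ($|\langle\pi(g)\varphi,\psi\rangle|\le\|\varphi\|_\infty\|\psi\|_\infty\Xi(g)$) together with $|\Gamma_T\setminus\Gamma^\theta_T|=o(|\Gamma_T|)$, and conclude with Lemma \ref{lemma: Schur on theta-coronae} and Lemma \ref{lemma: balls and coronae}; your explicit checks that $|E_T|/|E_{T+1}|\to e^{-\delta}$ and that the relevant sets are eventually nonempty fill in details the paper leaves implicit.
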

\begin{proof} As the correspondance $\theta\mapsto\Gamma^{\theta}$ is increasing for the inclusion, we may assume $\theta>0$ is small enough so that  $\overline{\frak a^{\theta}}\subset {\frak a}^{+}\cup\{0\}$. We have:
\begin{align*}
&\frac{1}{|E_{T}|}\sum_{\gamma\in E_{T}}\frac{\langle\pi(\gamma)\varphi,\psi\rangle\overline{\langle\pi(\gamma)\varphi',\psi'\rangle}}{\Xi^{2}(\gamma)}\\
&=\frac{|\Gamma^{\theta}_{T}|}{|E_{T}|}\left(\frac{1}{|\Gamma^{\theta}_{T}|}\sum_{\gamma\in \Gamma^{\theta}_{T}}\frac{\langle\pi(\gamma)\varphi,\psi\rangle\overline{\langle\pi(\gamma)\varphi',\psi'\rangle}}{\Xi^{2}(\gamma)}\right)\\
&+\frac{1}{|E_T|}\sum_{\gamma\in E_T\setminus\Gamma_T^\theta}
\frac{\langle\pi(\gamma)\varphi,\psi\rangle\overline{\langle\pi(\gamma)\varphi',\psi'\rangle}}{\Xi^2(\gamma)}.
\end{align*}		
On one hand, there is a constant $c>0$ such that, for any $\theta>0$, 
$$
|\Gamma^{\theta}_{T}|\sim|\Gamma_{T}|\sim c T^{(r-1)/2}\cdot e^{\delta T},
$$
on the other hand, positivity of the kernel defining $\pi$ gives
\[
|\langle\pi(g)\varphi,\psi\rangle|
\leq\|\varphi\|_\infty\|\psi\|_\infty\Xi(g).
\]
Consequently, the exceptional contribution satisfies
\begin{align*}
&\left|\frac{1}{|E_T|}\sum_{\gamma\in E_T\setminus\Gamma_T^\theta}
\frac{\langle\pi(\gamma)\varphi,\psi\rangle
\overline{\langle\pi(\gamma)\varphi',\psi'\rangle}}{\Xi(\gamma)^2}\right|\\
&\qquad\leq\frac{|E_T\setminus\Gamma_T^\theta|}{|E_T|}
\|\varphi\|_\infty\|\psi\|_\infty
\|\varphi'\|_\infty\|\psi'\|_\infty\longrightarrow0.
\end{align*}
This estimate remains valid when $E_T\setminus\Gamma_T^\theta$ is empty.
Hence, taking limits, and applying Lemma \ref{lemma: Schur on theta-coronae} and Lemma \ref{lemma: balls and coronae}, we obtain:
\begin{align*}
&\lim_{T\to\infty}\frac{1}{|E_{T}|}\sum_{\gamma\in E_{T}}\frac{\langle\pi(\gamma)\varphi,\psi\rangle\overline{\langle\pi(\gamma)\varphi',\psi'\rangle}}{\Xi^{2}(\gamma)}\\
&=\lim_{T\to\infty}\frac{1}{|\Gamma^{\theta}_{T}|}\sum_{\gamma\in \Gamma^{\theta}_{T}}\frac{\langle\pi(\gamma)\varphi,\psi\rangle\overline{\langle\pi(\gamma)\varphi',\psi'\rangle}}{\Xi^{2}(\gamma)}\\
&=\langle \varphi,\varphi'\rangle\overline{\langle\psi,\psi'\rangle}.
\end{align*}
This proves the first 	statement of the theorem. The second statement follows from the first, thanks to Lemma \ref{lemma: balls and coronae}.
\end{proof}

\subsection{Proof of Corollary \ref{cor: equivalences in rank 1}} We start with a well-known estimate of the Harish-Chandra function expressed in terms of the length function $L$ defined above and the volume entropy $\delta$ of the symmetric space $(X,d_{X})$.

\begin{lemma}\label{lemma: HC}(Asymptotic behavior of the Harish-Chandra function in rank $1$.) Let $G$ be a connected semi-simple Lie group with finite center. Assume $\mbox{rank}_{\mathbb R}G=1$. There is a constant $c>0$ such that for any $g\in G$
$$
\Xi(g)\sim c\cdot L(g)e^{-\frac{\delta}{2}L(g)}
$$
when $L(g)$ goes to infinity.
\end{lemma}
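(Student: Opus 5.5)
Since $\Xi$ is $K$-bi-invariant and $G=K\overline{A^+}K$, every $g$ can be written $g=k\exp(H)l$ with $H\in\overline{\frak a^+}$, and then $\Xi(g)=\Xi(\exp H)$ and $L(g)=\|H\|$. In real rank one, $\overline{\frak a^+}$ is a half-line spanned by $H_{max}$, so we may take $H=tH_{max}$ with $t=L(g)$. The statement therefore reduces to the one-variable asymptotic $\Xi(\exp(tH_{max}))\sim c\,t e^{-\rho(H_{max})t}$ as $t\to\infty$. This matches the stated form because $\delta=2\rho(H_{max})$.

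To obtain this, I will use Harish-Chandra's integral formula. Write $\Xi(\exp H)=\int_K e^{-\rho(\mathcal H(\exp(H)k))}\,dk$, where $\mathcal H$ is the Iwasawa projection. This formula is equivalent to the definition $\langle\pi(g)\mathbbm 1,\mathbbm 1\rangle$, because the Radon–Nikodym derivative is $\frac{dg_*\nu}{d\nu}(kM)=e^{-2\rho(\mathcal H(g^{-1}k))}$. Equivalently, $\Xi=\phi_0$ is the spherical function with spectral parameter $\lambda=0$.

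The cleanest route is then the rank one theory, in which $\phi_\lambda$ is a Jacobi function. Let $\alpha$ be the simple root, with multiplicities $m_\alpha=\dim\frak g_\alpha$ and $m_{2\alpha}=\dim\frak g_{2\alpha}$. The radial part of the Laplacian, in the variable $s=\alpha(tH_{max})$, is the hypergeometric operator with parameters determined by $m_\alpha$ and $m_{2\alpha}$. Hence $\phi_0(\exp tH_{max})={}_2F_1(a,a;c';-\sinh^2 s)$ for suitable parameters, up to normalisation, where $a=\rho(H_{max})/(2\alpha(H_{max}))$-type constants and $c'=(m_\alpha+m_{2\alpha}+1)/2$. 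The repeated parameter $a=b$ is the degenerate case: the connection formula at $z=\infty$ then involves a logarithm. This gives
\[
{}_2F_1(a,a;c';-x)\sim \frac{\Gamma(c')}{\Gamma(a)\Gamma(c'-a)}\,x^{-a}\log x,\qquad x\to\infty .
\]
Substituting $x=\sinh^2 s\sim e^{2s}/4$, so that $\log x\sim 2s$, yields a term proportional to $t$ times $e^{-\rho(H_{max})t}$, with an explicit positive constant $c$.

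As an alternative avoiding hypergeometric functions, I would use the $c$-function expansion $\phi_\lambda(a_t)=c(\lambda)e^{(i\lambda-\rho)(t)}(1+o(1))+c(-\lambda)e^{(-i\lambda-\rho)(t)}(1+o(1))$, valid for $\lambda\neq0$. The Harish-Chandra $c$-function has a simple pole at $\lambda=0$ in rank one. Differentiating, or taking the limit $\lambda\to0$ uniformly in $t$, produces the factor $t$. The coefficient is $c=\lim_{\lambda\to 0}2\,\mathrm{Res}$-type quantity, which is positive.

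I expect the main obstacle to be justifying uniformity. In the second approach this means controlling the remainder terms of the Harish-Chandra series uniformly near $\lambda=0$. This is why I prefer the explicit Jacobi-function identification, where the degenerate connection formula is classical. Alternatively, one could cite Harish-Chandra's result \cite{HelDifGeo} (Gangolli–Varadarajan), which gives $\Xi(\exp H)\asymp(1+\|H\|)^{d}e^{-\rho(H)}$ and its sharp rank one form, treating the statement as the well-known estimate the paper announces.
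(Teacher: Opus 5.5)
Your reduction (Cartan decomposition $G=K\overline{A^+}K$, $K$-bi-invariance of $\Xi$, $L(k_1\exp(tH_{max})k_2)=t$, and $\delta=2\rho(H_{max})$) is exactly the paper's. The only substantive step is the one-variable asymptotic $\Xi(\exp tH_{max})\sim c_2\,t\,e^{-\rho(H_{max})t}$. The paper simply quotes it from Anker \cite{Ank}; you derive it instead, and your main (Jacobi-function) route is correct.

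Put $s=t\alpha(H_{max})$. Then one has exactly $\Xi(\exp tH_{max})={}_2F_1(a,a;c';-\sinh^2 s)$, with $a=\rho(H_{max})/(2\alpha(H_{max}))=\tfrac14 m_\alpha+\tfrac12 m_{2\alpha}$ and $c'=\tfrac12(m_\alpha+m_{2\alpha}+1)$. No extra normalisation is needed, since both sides equal $1$ at $s=0$. Since $a>0$ and $c'-a=\tfrac14 m_\alpha+\tfrac12>0$, the logarithmic connection formula yields
\[
\Xi(\exp tH_{max})\sim\frac{2^{2a+1}\,\alpha(H_{max})\,\Gamma(c')}{\Gamma(a)\,\Gamma(c'-a)}\,t\,e^{-\rho(H_{max})t}\qquad(t\to\infty),
\]
where $\Gamma$ here is Euler's Gamma function, not the lattice. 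For example, for real hyperbolic $3$-space normalised so that $\alpha(H_{max})=1$, this recovers $\Xi=t/\sinh t\sim 2te^{-t}$.

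Your version gives a self-contained argument with an explicit positive constant. The cost is invoking the classical identification of rank-one spherical functions with Jacobi functions. The paper's citation is shorter and encodes the same fact.

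Two caveats:
\begin{itemize}
\item The $c$-function alternative needs exactly the uniformity in $\lambda$ near $0$ that you flag yourself, so it is best dropped.
\item Any fallback citation must be the \emph{sharp} asymptotic, as in Anker. A two-sided bound $\Xi(\exp tH_{max})\asymp(1+t)e^{-\rho(H_{max})t}$ only gives $\asymp$, not the claimed $\sim$.
\end{itemize}
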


\begin{proof}
Choose the unit vector $H_{max}\in\frak a^+$ and let $\alpha$ be the unique positive indivisible root. By \cite[page 373]{Ank}, there is $c_2>0$ such that
\[
\Xi(\exp(tH_{max}))\sim c_2\,t\alpha(H_{max})e^{-t\rho(H_{max})}
\qquad(t\to\infty).
\]
The Cartan decomposition reads $G=K\overline{\exp\left(\frak a^+\right)}K$. Thus every $g\in G$ can be written
\[
g=k_1\exp(tH_{max})k_2,\qquad k_1,k_2\in K,\quad t\geq0.
\]
Since $k_2$ fixes $x_0$ and $k_1$ acts isometrically and fixes $x_0$, we have
\[
L(g)=d_X(\exp(tH_{max})x_0,x_0)=t.
\]
The $K$-bi-invariance of $\Xi$ gives $\Xi(g)=\Xi(\exp(tH_{max}))$. Finally, $\delta=2\rho(H_{max})$, so the assertion follows with $c=c_2\alpha(H_{max})>0$.
\end{proof}

We first prove that $(1)\implies (2)$. Remember that
$$
\lim_{T\to\infty}\frac{|\Gamma_{T}|}{|\Gamma_{T+1}|}=1/e^{\delta}<1.
$$
Let $T_{0}>0$ big enough so that if  $T>T_{0}$ then $|\Gamma_{T+1}\setminus\Gamma_{T}|\neq 0$. 
Applying Lemma \ref{lemma: balls and coronae}  to hypothesis (1), we deduce that for  any  functions $\varphi,\psi,\varphi',\psi'$ in $L^{2}(G/P,\nu)$,
$$
\lim_{T\to\infty}\frac{1}{|\Gamma_{T+1}\setminus\Gamma_{T}|}\sum_{\gamma\in\Gamma_{T+1}\setminus\Gamma_{T}}\frac{\langle\pi(\gamma)\varphi,\psi\rangle\overline{\langle\pi(\gamma)\varphi',\psi'\rangle}}{\Xi^{2}(\gamma)}=\langle \varphi,\varphi'\rangle\overline{\langle\psi,\psi'\rangle}.
$$
We deduce that for $T>T_{0}$, the sesqui-multilinear forms
$$
M_{T}(\varphi,\psi,\varphi',\psi')=\frac{1}{|\Gamma_{T+1}\setminus\Gamma_{T}|}\sum_{\gamma\in\Gamma_{T+1}\setminus\Gamma_{T}}\frac{\langle\pi(\gamma)\varphi,\psi\rangle\overline{\langle\pi(\gamma)\varphi',\psi'\rangle}}{\Xi^{2}(\gamma)},
$$
form a pointwise bounded family: for any $\varphi,\psi,\varphi',\psi'$ in $L^{2}(G/P,\nu)$,
$$
\sup\{|M_{T}(\varphi,\psi,\varphi',\psi')|:T>T_{0}\}<\infty.
$$
Applying the Banach--Steinhaus theorem successively in each of the four variables (using the conjugate Hilbert space for the conjugate-linear variables), we obtain a uniform bound for the norms of these sesqui-multilinear forms, where
\[
\|M_T\|=\sup_{\max{\{\|\varphi\|_2;\|\psi\|_2;\|\varphi'\|_2;\|\psi'\|_2\}}\leq1}
|M_T(\varphi,\psi,\varphi',\psi')|.
\]
Thus there exists $C_{1}>0$ such that
$$
\sup\{\|M_{T}\|:T>T_{0}\}<C_{1}.
$$
In particular, for any $T>T_{0}$, for any $\varphi,\psi$ such that $\|\varphi\|_{2}=\|\psi\|_{2}=1$,
$$
\sum_{\gamma\in\Gamma_{T+1}\setminus\Gamma_{T}}\frac{|\langle\pi(\gamma)\varphi,\psi\rangle|^{2}}{\Xi^{2}(\gamma)}<C_{1}\cdot|\Gamma_{T+1}\setminus\Gamma_{T}|.
$$
It is then elementary to deduce from Lemma \ref{lemma: HC} and the asymptotic of $|\Gamma_{T}|$ the existence of a constant $C>C_{1}$ such that for all $T\geq 1$,
$$
\sum_{\gamma\in \Gamma_{T+1}\setminus\Gamma_{T}}|\langle\pi(\gamma)\varphi,\psi\rangle|^{2}\leq C\cdot T^{2}.
$$
We now prove the converse $(2)\implies (1)$. According to Theorem \ref{theo: main}, we know that the sequence converges to the announced limit for all continuous functions. As continuous functions form a dense subspace of the Hilbert space $L^2(G/P,\nu)$, it is enough to show that there is a constant $C_{1}>0$ such that
$$
\sup\{\|M_{T}\|:T>T_{0}\}<C_{1}.
$$
Suppose $T>T_{0}$ and  $\varphi,\psi,\varphi',\psi'$ in $L^{2}(G/P,\nu)$ are of Hilbert norm equal to $1$. Applying Cauchy-Schwarz we obtain:

\begin{align*}
&\left|M_T(\varphi,\psi,\varphi',\psi')\right|\\
&=\frac{1}{|\Gamma_{T+1}\setminus\Gamma_T|}
\left|\sum_{\gamma\in\Gamma_{T+1}\setminus\Gamma_T}
\frac{\langle\pi(\gamma)\varphi,\psi\rangle\overline{\langle\pi(\gamma)\varphi',\psi'\rangle}}{\Xi^2(\gamma)}\right|\\
&\leq\frac{1}{|\Gamma_{T+1}\setminus\Gamma_{T}|}\left(\sum_{\gamma\in\Gamma_{T+1}\setminus\Gamma_{T}}\frac{|\langle\pi(\gamma)\varphi,\psi\rangle|^{2}}{\Xi^{2}(\gamma)}\right)^{\frac{1}{2}}\\
&\qquad\times\left(\sum_{\gamma\in \Gamma_{T+1}\setminus\Gamma_{T}}\frac{|\langle\pi(\gamma)\varphi',\psi'\rangle|^{2}}{\Xi^{2}(\gamma)}\right)^{\frac{1}{2}}.
\end{align*}
Applying the hypothesis, it is elementary to deduce from Lemma \ref{lemma: HC} and the asymptotic of $|\Gamma_{T}|$ the existence of a constant $C_{1}>0$ such that for all $T\geq\max(T_{0};1)$,
$$
\left|M_{T}(\varphi,\psi,\varphi',\psi')\right|\leq C_{1}.
$$
By density and the uniform bound, the annular averages therefore converge for arbitrary $L^2$ vectors. Lemma~\ref{lemma: balls and coronae} then yields the asserted convergence of the ball averages.

The implication $(2)\implies (3)$ is deduced from the  combination of the following known results. 
\begin{enumerate}[label=\roman*)]
\item If $\pi|_\Gamma$ has property RD with respect to $L|_\Gamma$, then so does the regular representation of $\Gamma$. This follows from the positive-vector comparison of \cite[Lemma 4.2]{BoyRD}, since $\mathbbm 1_{G/P}$ is a nonzero positive vector. 
\item If the regular representation  of a finitely generated group $E$ has property $RD$ with respect to a length function $L_{E}$ then the regular representation  of any subgroup $H$ of $E$  has property $RD$ with respect to the length function $L_{E}|_{H}$; see \cite[Remark 4.3(1)]{Cha}. 
\item If the regular representation  of a finitely generated amenable group $H$ has property $RD$ with respect to a length function $L_{H}$  then $(H,L_{H})$ has polynomial growth; see the proof of \cite[Theorem 2.8]{Cha}.
\item Every nonuniform lattice contains an infinite cyclic
subgroup of exponential growth for the induced geometric length.
In rank one, see \cite[Section 1.4 and the proof of Corollary 1.2]{BoyRD} and notice 
that Lemma~\ref{lemma: L_{G} and d_{G}} identifies
the length used there with $L$, up to an additive constant.
\end{enumerate}
Condition (2) implies RD by summing over annuli. If $\Gamma$ were nonuniform, the preceding facts would force its exponentially growing cyclic subgroup to have polynomial growth, a contradiction.

The following lemma will be helpful when proving $(3)\implies (2)$ .
\begin{lemma}\label{lemma: L_{G} and d_{G}} Let $G$ be a non compact connected semi-simple Lie group with finite center. Using the notation introduced above, let $d_{G}$ be the left-invariant Riemannian metric on $G$, defined with the help of the Killing form and the chosen Cartan decomposition, and let $L$ be the length function defined with the help of the symmetric space $X=G/K$ associated to $G$. For all $g\in G$,
$$-\emph{diameter}(K)+d_G(e,g)\leq L(g)\leq d_{G}(e,g).$$
\end{lemma}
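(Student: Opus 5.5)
The upper bound comes from the Riemannian submersion, and the lower bound from lifting a geodesic and correcting by an element of $K$.

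\emph{Upper bound.} The canonical projection $p:G\to X=G/K$ is a Riemannian submersion, so it does not increase lengths of curves. Take a piecewise smooth curve $c$ in $G$ from $e$ to $g$ whose length is close to $d_G(e,g)$. Its image $p\circ c$ joins $x_0$ to $gx_0$ and has length at most that of $c$. Hence
$$L(g)=d_X(gx_0,x_0)\leq d_G(e,g).$$

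\emph{Lower bound.} I use the Cartan decomposition $G=K\exp(\overline{\frak a^+})K$ and write $g=k_1\exp(H)k_2$, so that $L(g)=\|H\|$.

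1. The curve $t\mapsto\exp(tH)$, $t\in[0,1]$, has velocity $d(L_{\exp(tH)})_e H$. Since $d_G$ is left-invariant, its length is exactly $\|H\|$, computed with the scalar product $\langle\cdot,\cdot\rangle$. Thus $d_G(e,\exp(H))\leq\|H\|=L(g)$.

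2. I would rather avoid handling right multiplication, since $d_G$ is only left-invariant. So I use the $K$-invariance of $L$ to reduce to a single correction. Let $\tilde\sigma$ be the horizontal lift through $e$ of a minimizing geodesic of $X$ from $x_0$ to $gx_0$; such a geodesic exists because $X$ is complete. Horizontal lifts of a Riemannian submersion preserve length, so $\tilde\sigma$ has length $L(g)$. Its endpoint $h$ satisfies $hx_0=gx_0$, hence $h^{-1}g\in K$.

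3. By left-invariance,
$$d_G(e,g)\leq d_G(e,h)+d_G(h,g)=d_G(e,h)+d_G(e,h^{-1}g)\leq L(g)+\mathrm{diameter}(K).$$
Here $\mathrm{diameter}(K)$ means the diameter of $K$ for the restriction of $d_G$, so $d_G(e,k)\leq\mathrm{diameter}(K)$ for every $k\in K$. This gives the stated lower bound.

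The only delicate point is the existence of horizontal lifts of finite-length curves with the same length. This is standard for a $G$-equivariant Riemannian submersion with compact fibers.

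Alternatively, step 2 can be done directly with the Cartan decomposition: $d_G(e,g)\leq d_G(e,k_1)+d_G(k_1,k_1\exp(H))+d_G(k_1\exp(H),g)$. The first two terms are controlled by the diameter and by step 1. The third term is $d_G(e,k_2)$ by left-invariance, which would cost a second diameter. The horizontal-lift argument avoids this, which is why I prefer it in order to obtain the constant exactly as stated.
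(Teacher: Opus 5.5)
Your proof is correct and follows the paper's argument. The upper bound comes from $p:G\to X$ being a Riemannian submersion. The lower bound comes from horizontally lifting a minimizing geodesic of $X$ from $x_0$ to $gx_0$ and correcting the endpoint $h$ by $h^{-1}g\in K$, with left-invariance giving $d_G(h,g)=d_G(e,h^{-1}g)\leq\mathrm{diameter}(K)$. Your step 1 with the curve $t\mapsto\exp(tH)$ is not used in the final argument.
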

\begin{proof} The canonical projection
$$p:G\to G/K=X$$
defines a right-$K$ principal bundle. The map $p$ is $G$-equivariant and is a Riemannian submersion relative to the Riemannian structures  associated with $d_{G}$ and $d_{X}$ (recall that $d_{G}$ is $G$-invariant on the left and $K$-invariant on the right). A geodesic in $G$ from $e$ to $g$
projects under $p$ to a path of no greater length  in  $X$ between $x_{0}$ and $gx_{0}$. This proves that $L(g)\leq d_G(e,g)$. Suppose $c$ is an arc length parametrized geodesic path in $X$ between $x_{0}$ and $gx_{0}$. The horizontal subspaces of the Riemannian submersion $p$ define a connection on the principal right $K$-bundle $G$. Let $\tilde{c}$ be the unique horizontal lift of $c$ starting at $e$. It is parametrized by its arc length and has the same length as $c$. The end point $\tilde{c}(L(g))$ differs from $g$ by an element of $K$, that is there exists $k\in K$ such that $\tilde{c}(L(g))k=g$, hence: 
$$d_{G}(e,g)\leq \mbox{length}(\tilde{c})+d_{G}(\tilde{c}(L(g)),\tilde{c}(L(g))k)=L(g)+d_{G}(e,k).$$
This proves that  
$$-\mbox{diameter}(K)+d_G(e,g)\leq L(g).$$
\end{proof}
We are ready to prove $(3)\implies (2)$. The restriction $d_{G}|_{\Gamma}$ of  the Riemannian metric $d_{G}$ on $G$ to the uniform lattice $\Gamma$ belongs to the class $\mathcal D(\Gamma)$ from \cite{BG}. This means that  $d_{G}|_{\Gamma}$ is $\Gamma$-invariant on the left, quasi-isometric to a word metric defined by a finite generating set of $\Gamma$, and Gromov hyperbolic. In rank one, the cocompact lattice  $\Gamma$ is Gromov-hyperbolic and its Gromov boundary may be identified with $G/P$. Its boundary action is amenable by \cite{Ad}, and the associated quasi-regular representation is weakly contained in the regular representation by \cite{Kun}; see also the discussion following \cite[Proposition 2.4]{BG}. Hence we are in the setting of \cite[Proposition 2.4]{BG}. Let us denote $d(K)=\mbox{diameter}(K)$. For each $T>0$ we consider the set
$$
A_{T,d(K)+1}=\{\gamma\in\Gamma: T-(d(K)+1)\leq d_G(e,\gamma)\leq T+d(K)+1\}.
$$ 
According to \cite[Proposition 2.4]{BG} (see also \cite[Theorem 1.2]{BoySph}), there exists a constant $C>0$ such that  for all $T\geq 1$, for all $\varphi,\psi\in L^2(G/P,\nu)$ of Hilbert norm one,
$$
\sum_{\gamma\in A_{T,d(K)+1}}|\langle\pi(\gamma)\varphi,\psi\rangle|^{2}\leq C\cdot T^{2}.
$$
The inclusion  $\Gamma_{T+1}\setminus\Gamma_{T}\subset A_{T,d(K)+1}$ follows from Lemma \ref{lemma: L_{G} and d_{G}}.
This finishes the proof of the corollary.

\end{document}